\newtheorem{thm}{Theorem}[section]
\newtheorem{lemma}[thm]{Lemma}
\newtheorem{defi}[thm]{Definition}
\newcommand{\R}{\mathbb{R}}
\newcommand{\Z}{\mathbb{Z}}
\providecommand{\norm}[1]{\lVert#1\rVert}
\title[Randomly perturbed self-affine sets]{Self-affine sets with non-compactly supported random perturbations}
\author{Thomas Jordan}
\address{Thomas Jordan\\School of Mathematics\\ The University of Bristol\\
University Walk\\Clifton\\ Bristol\\BS8 1TW\\UK}
\email{thomas.jordan@bristol.ac.uk}
\author{Natalia Jurga}
\address{Natalia Jurga\\School of Mathematics\\ The University of Bristol\\
University Walk\\Clifton\\ Bristol\\BS8 1TW\\UK}
\email{nj0529@bristol.ac.uk}
\thanks{The research carried out in this paper was supported by an undergraduate research bursary funded by the Nuffield Foundation and the London Mathematical Society. We would like to thank both organisations for their support. We also wish to thank Andrew Ferguson, K\'{a}roly Simon and Tuomas Sahlsten for useful conversations about this work. Finally we would like to thank the referee for providing detailed and helpful comments.}
\begin{document}
\begin{abstract}
In this note we consider the Hausdorff dimension of self-affine sets with random perturbations. We extend previous work in this area by allowing the random perturbation to be distributed according to distributions with unbounded support as long as the measure of the tails of the distribution decay super polynomially.  
\end{abstract}
\maketitle
\section{Introduction}
Calculating the Hausdorff dimension of self-affine sets has been an active area of research since the work of Bedford and McMullen \cite{bed,M}. While a lot of progress has been made in this time there are still several unresolved questions. Papers on Hausdorff dimension of self-affine sets tend to come in one of two types; either they calculate the Hausdorff dimension in some particular family as in \cite{bed} and \cite{M} or they obtain a result which holds for `generic' self-affine sets, \cite{Falcpaper}. The second approach started with a paper by Falconer, \cite{Falcpaper}, in which the Hausdorff dimension of self-affine sets was computed for almost all translations assuming the contraction rates are sufficiently small.

It was shown in \cite{tj} that if suitably defined random perturbations are added to a fixed self-affine set then an analogous result to Falconer's is obtained with no non-trivial assumption on the size of the contraction. However in \cite{tj} the random perturbations were assumed to be compactly supported so for example perturbations from a normal distribution could not be considered. In this note we show that this condition can be relaxed and be replaced by an assumption that the densities decay super polynomially, a natural  assumption which is satisfied by the normal distribution.

We consider a family $\mathcal{F}$ of affine contractions or iterated function system (IFS)
\begin{eqnarray}
\mathcal{F} := \{ f_i (x) = T_i \cdot x + a_i : i=1,\ldots, m\}
\label{ifs}
\end{eqnarray}
where $T_i \in \text{GL}_d(\mathbb{R})$ are such that $\norm{T_i} < 1$ for $1 \leq i \leq m$ and $a_i$ are vectors in $\mathbb{R}^d$. The following definition is standard:
\begin{defi}
Let $B$ be any large enough ball in $\mathbb{R}^d$ such that $f_i(B) \subseteq B$ for $1 \leq i \leq m$. Then the attractor of $\mathcal{F}$ is defined by the unique non-empty compact set for which 
\begin{displaymath}
\Lambda = \bigcup_{i=1}^m f_i(\Lambda)
\end{displaymath}
or alternatively,
\begin{displaymath}
\Lambda := \bigcap_{n=1}^{\infty} \bigcup_{i_0,\ldots,i_{n-1}} f_{i_0,\ldots ,i_{n-1}}(B)
\end{displaymath}
where $f_{i_0,\ldots ,i_{n-1}}= f_{i_0} \circ\cdots\circ f_{i_{n-1}}$ for $(i_0,\ldots,i_{n-1}) \in \{1,\ldots,m\}^n$.
\label{attractor}
\end{defi}

For an iterated function system (\ref{ifs}) we denote $$\norm{T}= \max \{\norm{T_i}: i=1,..., m\}$$ and $$\norm{a}= \max\{|a_i|: i=1,...,m\}.$$ 

We now introduce some notation and definitions in order to define the affinity dimension $d(T_1,\ldots,T_m)$ (In \cite{tj} this is referred to as the singularity dimension). When studying the family of contractions of the form (\ref{ifs}) we denote $J_{\infty}$ to be the set of all infinite words where $i_j \in \{1, \ldots, m\}$, $J_n$ to be the set of all finite words of length $n$ with $i_j \in \{1, \ldots, m\}$, $J$ to be the set of all finite words with $i_j \in \{1, \ldots, m\}$. For $\mathbf{i},\mathbf{j}\in J_{\infty}$ we denote $\mathbf{i} \wedge \mathbf{j}$ to be the truncation of $\mathbf{i}$ to the initial string that $\mathbf{i}$ and $\mathbf{j}$ agree on. For a string $\mathbf{i}= (i_0,...,i_{n}) \in J$ we 
denote 
$$[i_0,...,i_{n}]:=\{\textbf{j}\in J_{\infty}:j_m=i_m\text{ for all }0\leq m\leq n\}.$$
  Finally, for $\textbf{i}= (i_0, i_1,\ldots, i_n) \in J$ we denote $T_{\textbf{i}} = T_{i_0,\ldots, i_n} = T_{i_0}\cdots T_{i_n}$. 

Let $T: \mathbb{R}^d \to \mathbb{R}^d$ be a contracting, invertible, linear map, that is an invertible $d \times d$ matrix with matrix norm strictly less than $1$. We say that $\alpha$ is a singular value of $T$ if $\alpha$ is the positive square root of one of the eigenvalues of $T^{*}T$ , where $T^{*}$ denotes the transpose of $T$, or equivalently $\alpha$ is the length of one of the principal semi-axes of $T(B)$ where $B$ is the unit ball centred at the origin. 
We adopt the convention of denoting the $d$ singular values as $0< \alpha_d \leq \cdots\leq \alpha_1 < 1$. Sometimes, where it is not clear which matrix a singular value relates to, we will denote
$$0 < \alpha_d(T) \leq ... \leq \alpha_1(T) < 1$$
as the increasing singular values of  the $d \times d$ matrix $T$.

Fix $0\leq s\leq d$ and choose $r\in\mathbb{Z}$ such that $r-1<s\leq r$. We then define the singular value function as
\begin{displaymath}
\phi^s(T)= \alpha_1\alpha_2\cdots\alpha_{r-1}\alpha_r^{s-r+1}. 
\end{displaymath}
For $s>d$ we define $\phi^s(T)=(\alpha_1(T)\cdots\alpha_d(T))^{s/d}$. We are now ready to define the affinity dimension as follows,
\begin{defi}
For a self-affine IFS of the form (\ref{ifs}) we define the affinity dimension to be
$$d(T_1,\ldots,T_m) := \inf\{s > 0 : \sum_{n=0}^{\infty} \sum_{J_n} \phi^s(T_{i_0}\ldots T_{i_{n-1}}) < \infty\}$$
or equivalently, the value of s such that
$$ \lim_{n \to \infty}(\sum_{J_n} \phi^s(T_{i_0}\ldots T_{i_{n-1}}))^{\frac{1}{n}}= 1.$$
\label{s}
\end{defi}
In \cite{Falcpaper} Falconer gave the following almost sure result for the Hausdorff dimension of the attractor $\Lambda$ given that $\norm{T_i} < \frac{1}{3}$, whilst in \cite{s} Solomyak improved it to its present form with the weaker assumption that $\norm{T_i} < \frac{1}{2}$. Here almost sure is in terms of $md$-dimensional Lebesgue measure $\lambda_{md}$ on the translation vectors $(a_1,\ldots,a_m)\in\R^{dm}$.
\begin{thm}[Falconer]
Let
\begin{displaymath}
\mathcal{F} := \{ f_i (x) = T_i \cdot x + a_i : i=1,\ldots, m\}
\end{displaymath}
be an affine iterated function system with attractor $\Lambda$.
If 
$$\norm{T_i} < \frac{1}{2} \hspace{2mm} \forall 1 \leq i \leq m$$
then for $\lambda_{m \dot d}$-almost all vectors $\mathbf{a}:= (a_1,\ldots,a_m) \in \mathbb{R}^{m \dot d}$, the Hausdorff dimension of the attractor in Definition \ref{attractor} is
$$\mathrm{dim_H} \Lambda = d(T_1,\ldots,T_m)$$ where $d(T_1,\ldots,T_m)$ is as in Definition \ref{s}.
\end{thm}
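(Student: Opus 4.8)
The plan is to establish the two inequalities $\dim_H\Lambda\le d(T_1,\ldots,T_m)$ and $\dim_H\Lambda\ge d(T_1,\ldots,T_m)$ separately; the first holds for every translation vector $\mathbf a$, while the second holds for $\lambda_{md}$-almost every $\mathbf a$. For the upper bound I would use the natural covers $\{f_{\mathbf i}(B):\mathbf i\in J_n\}$ of $\Lambda$ from Definition \ref{attractor}: each $f_{\mathbf i}(B)$ is an ellipsoid whose principal semi-axes have lengths comparable to the singular values $\alpha_1(T_{\mathbf i})\ge\cdots\ge\alpha_d(T_{\mathbf i})$, so if $r-1<s\le r$ it can be covered by at most a constant multiple of $\prod_{j<r}\alpha_j(T_{\mathbf i})/\alpha_r(T_{\mathbf i})$ balls of radius comparable to $\alpha_r(T_{\mathbf i})$. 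Summing the $s$-th powers of these radii over $\mathbf i\in J_n$ gives, up to a constant, $\sum_{J_n}\phi^s(T_{\mathbf i})$; if $s>d(T_1,\ldots,T_m)$ this tends to $0$, so $\mathcal H^s(\Lambda)=0$, and letting $s\downarrow d(T_1,\ldots,T_m)$ yields $\dim_H\Lambda\le d(T_1,\ldots,T_m)$ for every $\mathbf a$.

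For the lower bound I would use the potential-theoretic (energy) method. Write $\Lambda=\Lambda_{\mathbf a}$ and code it by $\Pi_{\mathbf a}:J_\infty\to\R^d$, $\Pi_{\mathbf a}(\mathbf i)=\sum_{k\ge0}T_{i_0\cdots i_{k-1}}a_{i_k}$, so that $\Lambda_{\mathbf a}=\Pi_{\mathbf a}(J_\infty)$. Fix a bounded cube $D\subset\R^{md}$ and $0<s<d(T_1,\ldots,T_m)$, and let $\mu$ be a Borel probability measure on $J_\infty$ with $\mu[\mathbf k]$ comparable to $\phi^s(T_{\mathbf k})/\Sigma_{|\mathbf k|}$, where $\Sigma_n:=\sum_{J_n}\phi^s(T_{\mathbf i})$ (for instance a K\"{a}enm\"{a}ki equilibrium state for the subadditive potential $\phi^s$, or the measure constructed in Falconer's original proof). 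The target estimate is
\begin{displaymath}
\int_D\iint|\Pi_{\mathbf a}(\mathbf i)-\Pi_{\mathbf a}(\mathbf j)|^{-s}\,d\mu(\mathbf i)\,d\mu(\mathbf j)\,d\mathbf a<\infty .
\end{displaymath}
By Tonelli the inner double integral is then finite for $\lambda_{md}$-almost every $\mathbf a\in D$, i.e.\ $(\Pi_{\mathbf a})_*\mu$ has finite $s$-energy, so Frostman's lemma gives $\dim_H\Lambda_{\mathbf a}\ge s$ for a.e.\ $\mathbf a\in D$; taking $s\uparrow d(T_1,\ldots,T_m)$ along a sequence and exhausting $\R^{md}$ by such cubes $D$ completes the lower bound.

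The heart of the matter is to estimate the inner $\mathbf a$-integral for fixed $\mathbf i\ne\mathbf j$ with $n:=|\mathbf i\wedge\mathbf j|$. Since the first $n$ symbols agree, $\Pi_{\mathbf a}(\mathbf i)-\Pi_{\mathbf a}(\mathbf j)=T_{\mathbf i\wedge\mathbf j}\big(\Pi_{\mathbf a}(\sigma^n\mathbf i)-\Pi_{\mathbf a}(\sigma^n\mathbf j)\big)$, where $\sigma^n\mathbf i:=(i_n,i_{n+1},\ldots)$; and because $i_n\ne j_n$ one can change variables in $\mathbf a$, essentially replacing the coordinate $a_{i_n}$ by $w:=\Pi_{\mathbf a}(\sigma^n\mathbf i)-\Pi_{\mathbf a}(\sigma^n\mathbf j)$. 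A careful version of this substitution, due originally to Falconer and sharpened by Solomyak, is exactly what makes the hypothesis $\norm{T_i}<\tfrac12$ (rather than Falconer's $\tfrac13$) enough to keep the Jacobian bounded away from $0$ uniformly in $\mathbf i,\mathbf j$. The integral then reduces to $\int_{|w|\le\rho}|T_{\mathbf i\wedge\mathbf j}\,w|^{-s}\,dw$ over a fixed ball, and a direct computation with the singular values of $T_{\mathbf i\wedge\mathbf j}$ (valid for non-integer $s$, the integer values handled by a limiting argument) bounds this by $c\,\phi^s(T_{\mathbf i\wedge\mathbf j})^{-1}$ with $c$ independent of $\mathbf i,\mathbf j$. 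Substituting back, the triple integral is at most a constant times $\iint\phi^s(T_{\mathbf i\wedge\mathbf j})^{-1}\,d\mu(\mathbf i)\,d\mu(\mathbf j)=\sum_{n\ge0}\sum_{\mathbf k\in J_n}\phi^s(T_{\mathbf k})^{-1}(\mu\times\mu)(\{(\mathbf i,\mathbf j):\mathbf i\wedge\mathbf j=\mathbf k\})$, and since $(\mu\times\mu)(\{\mathbf i\wedge\mathbf j=\mathbf k\})\le\mu[\mathbf k]^2$ the choice of $\mu$ makes this at most a constant times $\sum_{n\ge0}\Sigma_n^{-1}$, which converges precisely because $s<d(T_1,\ldots,T_m)$ forces $\Sigma_n^{1/n}$ to a limit strictly greater than $1$ (Definition \ref{s}).

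I expect the main obstacle to be this geometric step: making the change of variables rigorous and extracting the clean estimate $\int_{|w|\le\rho}|Tw|^{-s}\,dw\le c\,\phi^s(T)^{-1}$ with $c$ independent of $T$ — in particular uniform as the singular values of $T$ degenerate and as $s$ approaches an integer, which is where the constant $\tfrac12$ and the exclusion of integer $s$ genuinely enter. A secondary technical point is that $\phi^s$ is only submultiplicative, so producing a measure $\mu$ with $\mu[\mathbf k]$ controlled by $\phi^s(T_{\mathbf k})/\Sigma_{|\mathbf k|}$ requires some care, either via thermodynamic formalism or via Falconer's more elementary construction.
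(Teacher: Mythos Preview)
The paper does not prove this theorem: it is stated as background and attributed to Falconer \cite{Falcpaper} with the $\tfrac12$ bound due to Solomyak \cite{s}, with no proof given. So there is nothing in the paper to compare your proposal against directly.

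That said, your sketch is a faithful outline of the classical Falconer--Solomyak argument, and in fact the lower-bound machinery you describe (K\"aenm\"aki/Falconer-type measure with $\mu[\omega]\lesssim\phi^s(T_\omega)$, the transversality-style bound $\int|\Pi_{\mathbf a}(\mathbf i)-\Pi_{\mathbf a}(\mathbf j)|^{-t}\,d\mathbf a\le c\,\phi^t(T_{\mathbf i\wedge\mathbf j})^{-1}$, the energy double integral, and the final geometric series) is exactly the template the paper adapts in Section~3 (Lemmas~\ref{existence}--\ref{lower bound}) to its random setting. The one genuine difference is where the transversality comes from: in Falconer's setting it is the change of variables in $\mathbf a$ you describe (and the $\tfrac12$ bound is what makes the Jacobian uniformly bounded), whereas in the paper the random perturbation $y_{i_0,\ldots,i_n}$ plays the role of the free variable and no norm restriction is needed. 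Your identification of the main obstacle --- the uniform estimate $\int_{|w|\le\rho}|Tw|^{-s}\,dw\le c\,\phi^s(T)^{-1}$ and the care needed at integer $s$ --- is accurate; this is precisely the content of Falconer's Lemma~2.2/Solomyak's refinement, and the paper's Lemma~\ref{c} is the analogous computation in the random model.
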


In \cite{tj} it was shown that if a small random translation was allowed at each stage of the application of the contractions then the Hausdorff dimension of the resulting perturbed attractor will almost surely be equal to the affinity dimension. In particular, no restrictions on the norms of the maps was necessary. More precisely, it is assumed that at each application of the maps from the IFS we make a random additive error where these errors have distribution $\kappa$ where $\kappa$ is an absolutely continuous distribution with bounded density supported on a bounded disk $D$ which is centred at the origin. In particular, for $\mathbf{i_n}= (i_0,\ldots,i_{n-1}) \in J_n$ we denote 
$$f_{\mathbf{i_n}}^{\mathbf{x_{i_n}}} := (f_{i_0} + x_{i_0}) \circ (f_{i_1} + x_{i_0, i_1} ) \circ \ldots \circ (f_{i_{n-1}} + x_{i_0,\ldots,i_{n-1}})$$
where the elements of
$$\mathbf{x_{i_n}}:= (x_{i_0},\ldots, x_{i_0,\ldots,i_{n-1}}) \in D \times \ldots \times D$$
are independently and identically distributed with distribution $\kappa$. Let $\varphi(k)$ be the $k$-th element of the countable sequence
$$\{1, 2, \ldots, m, (1, 1), (1, 2), \ldots, (m, m), (1, 1, 1), \ldots \}$$
so that we can label the perturbations by the natural numbers $x_k= x_{\mathbf{i_n}}$ if $\varphi(k)=\mathbf{i_n}$. 
Then we can write the sequence of all random errors that perturb the attractor $\Lambda$ as $\mathbf{x}= \{x_k\}_{k \in \mathbb{N}} \in D^{\infty}$. This suggests the following definition
\begin{defi}
$$\Lambda_{\mathbf{x}} := \bigcap_{n=1}^{\infty} \bigcup_{\mathbf{i_n}} f_{\mathbf{i_n}}^{\mathbf{x_{i_n}}}(B)$$
where B is a ball, centred at the origin, which is sufficiently large such that $f_{\mathbf{i_n}}^{\mathbf{x_{i_n}}}(B)\subset B$ for all $\mathbf{x}\in D^{\infty}$ and $\mathbf{i}\in J_{\infty}$.
\label{attractor2}
\end{defi}
Letting $\mu$ denote the infinite product measure $\mu= \kappa \times \kappa \times \ldots$ on $D^{\infty}$ we can state the main result in \cite{tj}
\begin{thm}[Jordan-Pollicott-Simon]\label{JPS}
For a self-affine IFS of the form (\ref{ifs}), and for $\mu$-almost all $\mathbf{x} \in D^{\infty}$ then:
\begin{enumerate}
\item
 If $d(T_1,\ldots, T_m) \leq d$ then $\mathrm{dim_H \Lambda_{\mathbf{x}}} = d(T_1,\ldots, T_m)$
\item
 If $d(T_1,\ldots, T_m) > d$ then $\lambda_d(\Lambda_{\mathbf{x}}) > 0$, where $\lambda_d$ denotes $d$-dimensional Lebesgue measure.
\end{enumerate}
\end{thm}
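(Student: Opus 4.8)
The plan is to prove the upper bound $\dim_H\Lambda_{\mathbf{x}}\le d(T_1,\ldots,T_m)$ and the complementary statements separately, following the strategy of \cite{Falcpaper} and \cite{tj}. The upper bound carries no randomness: every perturbed composition $f_{\mathbf{i_n}}^{\mathbf{x_{i_n}}}$ has linear part $T_{\mathbf{i_n}}$, exactly as in the unperturbed system, so Falconer's covering estimate---the half of the theorem above that needs no hypothesis on $\norm{T_i}$---applies verbatim to the cover $\Lambda_{\mathbf{x}}\subseteq\bigcup_{\mathbf{i_n}\in J_n}f_{\mathbf{i_n}}^{\mathbf{x_{i_n}}}(B)$ and gives $\dim_H\Lambda_{\mathbf{x}}\le d(T_1,\ldots,T_m)$ for \emph{every} $\mathbf{x}\in D^\infty$. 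Thus in case (1) only the matching lower bound for $\mu$-a.e.\ $\mathbf{x}$ remains, and in case (2) nothing further is needed for the dimension. Throughout, let $\pi_{\mathbf{x}}\colon J_\infty\to\Lambda_{\mathbf{x}}$ be the coding map of the perturbed system, $\pi_{\mathbf{x}}(\mathbf{i})=\lim_{n\to\infty}f_{\mathbf{i_n}}^{\mathbf{x_{i_n}}}(0)$ (well defined since $\norm{T_i}<1$), and for $t\le d$ let $P(t)=\lim_n\frac1n\log\sum_{\mathbf{i_n}\in J_n}\phi^t(T_{\mathbf{i_n}})$ be the subadditive pressure; $P$ is non-increasing, $P(d(T_1,\ldots,T_m))=0$ by the second description in Definition \ref{s}, and $P(t)>0$ whenever $t<d(T_1,\ldots,T_m)$ because $\norm{T_i}<1$ forces a strict drop.

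For the lower bound in case (1) I would use the potential-theoretic method. Fix $s<d(T_1,\ldots,T_m)$, pick $t$ with $s<t<d(T_1,\ldots,T_m)$ (so $t\le d$), and let $m_t$ be a K\"{a}enm\"{a}ki-type equilibrium measure on $J_\infty$ for the singular value function of exponent $t$; following \cite{tj} one may take $m_t$ to satisfy $m_t([\mathbf{k}])\le C\phi^t(T_{\mathbf{k}})e^{-|\mathbf{k}|P(t)}$ for all $\mathbf{k}\in J$, which is all that will be used. Push $m_t$ forward to the probability measure $(\pi_{\mathbf{x}})_*m_t$ on $\Lambda_{\mathbf{x}}$ and average its $s$-energy over $\mathbf{x}$:
\[
\int_{D^\infty}\iint\frac{dm_t(\mathbf{i})\,dm_t(\mathbf{j})}{\norm{\pi_{\mathbf{x}}(\mathbf{i})-\pi_{\mathbf{x}}(\mathbf{j})}^{s}}\,d\mu(\mathbf{x})=\iint\Bigl(\int_{D^\infty}\frac{d\mu(\mathbf{x})}{\norm{\pi_{\mathbf{x}}(\mathbf{i})-\pi_{\mathbf{x}}(\mathbf{j})}^{s}}\Bigr)dm_t(\mathbf{i})\,dm_t(\mathbf{j}).
\]
The crucial input, discussed below, is the transversality estimate $\int_{D^\infty}\norm{\pi_{\mathbf{x}}(\mathbf{i})-\pi_{\mathbf{x}}(\mathbf{j})}^{-s}\,d\mu(\mathbf{x})\le C\phi^s(T_{\mathbf{i}\wedge\mathbf{j}})^{-1}$. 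Granting it and organising the inner double integral by the value $\mathbf{k}=\mathbf{i}\wedge\mathbf{j}$, the right-hand side is at most $C\sum_{n\ge0}\sum_{\mathbf{k}\in J_n}m_t([\mathbf{k}])^2\phi^s(T_{\mathbf{k}})^{-1}$; since $\phi^t\le\phi^s$ pointwise (the singular value function decreases in its exponent and $t>s$), this is at most $C'\sum_n e^{-2nP(t)}\sum_{\mathbf{k}\in J_n}\phi^t(T_{\mathbf{k}})=C'\sum_n e^{-nP(t)+o(n)}<\infty$, the last step using $P(t)>0$. Hence the $s$-energy of $(\pi_{\mathbf{x}})_*m_t$ is finite for $\mu$-a.e.\ $\mathbf{x}$, so $\dim_H\Lambda_{\mathbf{x}}\ge s$ by Frostman's lemma; letting $s$ run through a sequence increasing to $d(T_1,\ldots,T_m)$ completes case (1).

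For case (2) the same scheme is run at the borderline exponent $s=d$, but there $\norm{w}^{-d}$ is not locally integrable on $\R^d$ and the averaged $d$-energy diverges, so instead one shows $(\pi_{\mathbf{x}})_*m_d$ is absolutely continuous with $L^2$ density for $\mu$-a.e.\ $\mathbf{x}$, which forces $\lambda_d(\Lambda_{\mathbf{x}})\ge\lambda_d(\mathrm{supp}\,(\pi_{\mathbf{x}})_*m_d)>0$. By Fatou's lemma and Tonelli it suffices to bound $\limsup_{r\to0}\lambda_d(B(0,r))^{-1}\iint\mu\{\norm{\pi_{\mathbf{x}}(\mathbf{i})-\pi_{\mathbf{x}}(\mathbf{j})}\le r\}\,dm_d(\mathbf{i})\,dm_d(\mathbf{j})$; the probability bound $\mu\{\norm{\pi_{\mathbf{x}}(\mathbf{i})-\pi_{\mathbf{x}}(\mathbf{j})}\le r\}\le\min\bigl(1,\ \norm{\rho}_\infty\lambda_d(B(0,r))\phi^d(T_{\mathbf{i}\wedge\mathbf{j}})^{-1}\bigr)$, with $\rho$ the density of $\kappa$, splits this into the series $\norm{\rho}_\infty\sum_n\sum_{\mathbf{k}\in J_n}m_d([\mathbf{k}])^2\phi^d(T_{\mathbf{k}})^{-1}$, finite because $d<d(T_1,\ldots,T_m)$ gives $P(d)>0$ (exactly as above), plus a tail $\sum_{\mathbf{k}:\,\phi^d(T_{\mathbf{k}})<Cr^d}m_d([\mathbf{k}])^2\phi^d(T_{\mathbf{k}})^{-1}$, which tends to $0$ as $r\to0$ since $\{\mathbf{k}:\phi^d(T_{\mathbf{k}})<Cr^d\}$ shrinks to the empty set while the full series converges. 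So $(\pi_{\mathbf{x}})_*m_d\in L^2$ for $\mu$-a.e.\ $\mathbf{x}$, giving case (2).

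The main obstacle is the transversality estimate $\int_{D^\infty}\norm{\pi_{\mathbf{x}}(\mathbf{i})-\pi_{\mathbf{x}}(\mathbf{j})}^{-s}\,d\mu(\mathbf{x})\le C\phi^s(T_{\mathbf{i}\wedge\mathbf{j}})^{-1}$ together with its ball analogue. Writing $\mathbf{k}=\mathbf{i}\wedge\mathbf{j}$, of length $n$, and letting $i_n\ne j_n$ be the first disagreement, one expands $\pi_{\mathbf{x}}(\mathbf{i})-\pi_{\mathbf{x}}(\mathbf{j})=T_{\mathbf{k}}\bigl(x_{\mathbf{k}i_n}-x_{\mathbf{k}j_n}+(\text{deeper perturbation terms, each carried by a further contraction})\bigr)$ and observes that $x_{\mathbf{k}i_n}$ enters affinely and is independent of every other variable occurring here. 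Conditioning on those other variables and integrating over $x_{\mathbf{k}i_n}$ alone reduces the estimate to a bound for $\int_{\R^d}\norm{T_{\mathbf{k}}(z+c)}^{-s}\rho(z)\,dz$ with $c$ fixed; the substitution $w=T_{\mathbf{k}}z$, together with $\norm{\rho}_\infty<\infty$ and boundedness of $D=\mathrm{supp}\,\kappa$, bounds this by $\norm{\rho}_\infty\phi^d(T_{\mathbf{k}})^{-1}\int_{T_{\mathbf{k}}(D)}\norm{w}^{-s}\,dw$, and an elementary estimate of $\int\norm{w}^{-s}\,dw$ over the ellipsoid $T_{\mathbf{k}}(D)$---whose semi-axes are proportional to the singular values of $T_{\mathbf{k}}$---produces the factor $\phi^s(T_{\mathbf{k}})/\phi^d(T_{\mathbf{k}})$ when $s<d$, which cancels the Jacobian and leaves exactly $\phi^s(T_{\mathbf{k}})^{-1}$; the ball analogue needs only $\lambda_d\{z:\norm{T_{\mathbf{k}}(z+c)}\le r\}=\lambda_d(B(0,r))\phi^d(T_{\mathbf{k}})^{-1}$ and $\norm{\rho}_\infty<\infty$. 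Here boundedness of the density supplies the probability bounds, whereas boundedness of the support is used only to keep the last ellipsoid integral finite; it is exactly at this step that, once $\kappa$ is allowed to have unbounded support, one must replace the crude truncation at $D$ by a quantitative decay estimate for $\int\norm{T_{\mathbf{k}}z}^{-s}\rho(z)\,dz$, available precisely when the tails of $\kappa$ decay super-polynomially.
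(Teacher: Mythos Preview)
This theorem is not proved in the present paper; it is quoted from \cite{tj} as background, and the paper's own work (Sections~2--3) proves the extension Theorem~\ref{main}. Since those sections follow \cite{tj} closely, your sketch can sensibly be compared against them.

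Your overall strategy---Falconer's covering argument for the upper bound, a K\"aenm\"aki-type measure plus averaged $s$-energy for the lower bound, and the Peres--Solomyak differentiation argument for positive Lebesgue measure---matches the paper's (and \cite{tj}'s) line for line. The bookkeeping differs only cosmetically: you carry the pressure $P(t)$ and the Gibbs bound $m_t([\mathbf k])\le C\phi^t(T_{\mathbf k})e^{-|\mathbf k|P(t)}$, whereas the paper uses the bare inequality $\mu([\omega])\le c'\phi^s(T_\omega)$ of Lemma~\ref{existence} together with $\phi^s/\phi^t\le b^{|\omega|(s-t)}$; both collapse to the same convergent geometric series.

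The one substantive difference is in how the transversality estimate is obtained. You pass by the change of variables $w=T_{\mathbf k}z$ and then invoke the ellipsoid bound $\int_{T_{\mathbf k}(D)}\lVert w\rVert^{-s}\,dw\le C\,\phi^d(T_{\mathbf k})/\phi^s(T_{\mathbf k})$, which is correct and is essentially Falconer's original lemma. The paper instead first proves the probability bound $\mathbb{P}\{\lvert\Pi^{\mathbf y}(\mathbf i)-\Pi^{\mathbf y}(\mathbf j)\rvert<\rho\}\le C\,Z_{\mathbf i\wedge\mathbf j}(\rho)$ (Lemma~\ref{trans}), using only that the density of $\eta$ and of its orthogonal marginals is bounded, and then integrates via the layer-cake formula (Lemma~\ref{c}). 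Your closing remark---that compact support of $\kappa$ is what keeps the ellipsoid integral finite and must be replaced by tail decay in the unbounded case---is accurate for \emph{your} route but misidentifies where the paper actually spends the super-polynomial hypothesis: in the paper's approach the transversality and energy estimates need only bounded density, and the super-polynomial decay is used exclusively to make the attractor well defined and to obtain the \emph{upper} bound (Lemmas~\ref{bc}--\ref{case 1}).
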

In this note we build on the result of \cite{tj}, in that perturbations are no longer assumed to be taken from a bounded disk $D$. 
\begin{defi}\label{condition}
Let $\eta$ be an absolutely continuous distribution with bounded density supported on the space $Y=\mathbb{R}^d$ which satisfies the following condition: for every $k\in\mathbb{N}$ there exists a constant $c_k$ such that for all $t>0$, $\eta\{|X|> t\} \leq c_kt^{-k}$. ($\eta$ decays super-polynomially).
\end{defi}
 Such a measure $\eta$ will also have the following property: There exists a constant $K>0$ such that for any  $1 \leq n \leq d-1$ and set of orthonormal basis $\{z_1,\ldots, z_n\}$ if we let $\pi_n$ be a projection to the $n$-dimensional space spanned by $\{z_1,\ldots, z_n\}$ then the push-forward measure ${\pi_n}_{\ast} \eta$ must have some density $f$ with respect to $n$-dimensional Lebesgue measure, where $f\in L^{\infty}$ and $\norm{f}_{\infty}\leq K$.

Let
$$f_{\mathbf{i_n}}^{\mathbf{y_{i_n}}} := (f_{i_0} + y_{i_0}) \circ (f_{i_1} + y_{i_0, i_1} ) \circ \ldots \circ (f_{i_{n-1}} + y_{i_0,\ldots,i_{n-1}})$$
where the elements of
$$\mathbf{y_{i_n}}:= (y_{i_0},\ldots, y_{i_0,\ldots,i_{n-1}}) \in Y \times \ldots \times Y$$
are independently and identically distributed with distribution $\eta$. Define a random perturbation of the attractor $\Lambda$ as $\mathbf{y}= \{y_k\}_{k \in \mathbb{N}} \in Y^{\infty}$, numbered naturally as before, and put $\mathbb{P}$ as the infinite product measure $\mathbb{P}= \eta \times \eta \times \ldots$ on $Y^{\infty}$. Clearly, we can no longer define the attractor of this system in the same way as in Definition \ref{attractor2} since there does not exist a large enough ball $B$, as the distribution $\eta$ is not supported on a bounded disk. For the self-affine IFS of the form (\ref{ifs}) and the infinite word $\mathbf{i}= (i_0, i_1, \ldots) \in J_{\infty}$ set
\begin{eqnarray}
\Pi^{\mathbf{y}}(\mathbf{i}) &:=& \lim_{r \to \infty}(f_{i_0} +y_{i_0})\circ(f_{i_1} + y_{i_0, i_1})\circ\cdots\circ(f_{i_r} +y_{i_0,\ldots,i_r})(0) \label{pi1} \\
&=& a_{i_0} + y_{i_0} + \sum_{r=1}^{\infty}T_{i_0,\ldots ,i_{r-1}}(a_{i_r} + y_{i_0,\ldots,i_r}) \label{pi2}
\end{eqnarray}
if this limit exists.
In Lemma \ref{welldefined} we prove that for $\mathbb{P}$-almost all $\mathbf{y} \in Y^{\infty}$ this series converges for all $\mathbf{i}\in J_{\infty}$ thus we can define the attractor for $\mathbb{P}$-almost all $\mathbf{y} \in Y^{\infty}$ by
\begin{defi}
For a self-affine IFS of the form (\ref{ifs}), if for $\mathbf{y} \in Y^{\infty}$ we have that $\Pi^{\mathbf{y}}(i)$ is well defined for all $\mathbf{i}\in J_{\infty}$ then the associated attractor is defined to be
$$\Lambda^{\mathbf{y}} := \{ \Pi^{\mathbf{y}}(\mathbf{i}) :\mathbf{i}\in J_{\infty} \}.$$
\label{attractor3}
\end{defi}
 We can now state our main theorem
\begin{thm}\label{main}
For a self-affine IFS of the form (\ref{ifs}) and the attractor $\Lambda^{\mathbf{y}}$ defined in \ref{attractor3} we have that 
\begin{enumerate}
\item
 If $d(T_1, \ldots, T_m) \leq d$ then $\mathbb{P}$-almost surely, $\mathrm{dim_H}(\Lambda^{\mathbf{y}}) = d(T_1, \ldots, T_m)$
\item
 If $d(T_1, \ldots, T_m) > d$ then $\mathbb{P}$-almost surely $\lambda_d(\Lambda^{\mathbf{y}}) >0$.
\end{enumerate}
\end{thm}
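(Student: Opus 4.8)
The plan is to follow the proof of Theorem \ref{JPS} in \cite{tj}: construct a natural measure on the symbolic space $J_\infty$, push it forward under $\Pi^{\mathbf y}$, control an energy integral (for (1)) or a lower density (for (2)) in $\mathbb P$-expectation, and pass to an almost sure statement via Fubini and the Borel--Cantelli lemma. Every step there is insensitive to whether the perturbations are bounded except one Falconer-type inequality, and the work of this proof is to prove, uniformly in $\mathbf w := \mathbf i \wedge \mathbf j$, two such inequalities for the unbounded law $\eta$: for $r>0$ and for $0 \le s < d$ with $s \notin \mathbb Z$,
$$\mathbb E_{\mathbb P}\big[\,|\Pi^{\mathbf y}(\mathbf i)-\Pi^{\mathbf y}(\mathbf j)|^{-s}\,\big] \le C\,\phi^{s}(T_{\mathbf w})^{-1}, \qquad \mathbb P\big\{|\Pi^{\mathbf y}(\mathbf i)-\Pi^{\mathbf y}(\mathbf j)| \le r\big\} \le C\,r^{d}\,\phi^{d}(T_{\mathbf w})^{-1},$$
with $C$ depending only on $d$, $s$ and the constants attached to $\eta$. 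The hard part is getting $C$ independent of $\mathbf w$ and of the perturbations: in \cite{tj} uniformity came from the perturbations lying in a fixed bounded disk, whereas here the relevant random vectors range over all of $\mathbb R^d$.

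The first step is an algebraic reduction that makes Falconer's lemma applicable. Fix $\mathbf i,\mathbf j \in J_\infty$ with $\mathbf i \wedge \mathbf j = \mathbf w = (i_0,\dots,i_{n-1})$. Splitting the series (\ref{pi2}) for $\Pi^{\mathbf y}(\mathbf i)$ and $\Pi^{\mathbf y}(\mathbf j)$ at level $n$, the terms indexed by prefixes of $\mathbf w$ coincide and cancel, leaving
$$\Pi^{\mathbf y}(\mathbf i)-\Pi^{\mathbf y}(\mathbf j) = T_{\mathbf w}\,(U-V),$$
where $U = a_{i_n}+y_{\mathbf w i_n}+\sum_{r \ge 1} T_{i_n\cdots i_{n+r-1}}(a_{i_{n+r}}+y_{\mathbf w i_n\cdots i_{n+r}})$ involves only the perturbations whose index begins with $\mathbf w i_n$, and $V$ is the analogous series using only those whose index begins with $\mathbf w j_n$. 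Since $i_n \ne j_n$ these two families are disjoint, so $U$ and $V$ are independent; moreover $U = y_{\mathbf w i_n}+R_U$ with $y_{\mathbf w i_n} \sim \eta$ independent of $R_U$, so the law of $U$ is $\eta$ convolved with a probability law, and likewise for $V$ and hence for $W := U - V$. Because $\norm{f \ast g}_\infty \le \norm{f}_\infty\norm{g}_1$, it follows from Definition \ref{condition} and the projection property recorded just after it that the law of $W$, and each of its orthogonal projections onto a subspace of dimension $1 \le k \le d$, has density bounded by a constant $K'$ depending only on the constants attached to $\eta$, and in particular not on $\mathbf w$, $\mathbf i$ or $\mathbf j$. (The super-polynomial tails of $\eta$ are also inherited by $W$; these are not needed for the two inequalities, but they underlie Lemma \ref{welldefined} and the diameter control used in the upper bound.)

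The second step is a form of Falconer's lemma: if $\theta$ is a Borel probability measure on $\mathbb R^d$ such that $\theta$ and all its orthogonal projections onto subspaces of dimension $1,\dots,d$ have density at most $K'$, then for $0 \le s < d$ with $s \notin \mathbb Z$ one has $\int_{\mathbb R^d} |Tz|^{-s}\, d\theta(z) \le C(d,s)\,K'\,\phi^{s}(T)^{-1}$, and for every $r > 0$ one has $\theta\{z : |Tz| \le r\} \le C(d)\,K'\,r^{d}\,\phi^{d}(T)^{-1}$. The second bound is immediate, since $\{|Tz| \le r\}$ is an ellipsoid of volume $C(d)\,r^{d}/\phi^{d}(T)$ and $\theta$ has density at most $K'$. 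For the first, write the integral as the sum over $k \in \mathbb Z$ of $2^{ks}\,\theta\{2^{-k-1} \le |Tz| < 2^{-k}\}$; bound $\theta$ of the ellipsoid $\{|Tz| < 2^{-k}\}$ by $K'$ times the $j$-volume of its orthogonal projection onto its $j$ shortest principal axes, minimised over $1 \le j \le d$, and also by $1$; then sum the resulting finitely many geometric series, which converge at the small-ellipsoid end because $s < d$ and at the large-ellipsoid end because $s > 0$. This reproduces Falconer's computation and yields the claimed power of $\phi^{s}(T)$; the essential point is that the constant depends only on the density bound $K'$ and not on the size or location of the support of $\theta$, which is precisely what allows unbounded perturbations. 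Applying the two bounds to $\theta = \operatorname{law}(W)$ and $T = T_{\mathbf w}$ and taking expectations yields the two inequalities of the first paragraph.

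It remains to feed these into the known machinery. For the lower bound in (1): for each non-integer $s < d(T_1,\dots,T_m)$ (so $s < d$) choose, exactly as in \cite{tj}, a measure $\nu$ on $J_\infty$ with $\iint \phi^{s}(T_{\mathbf i \wedge \mathbf j})^{-1}\, d\nu(\mathbf i)\, d\nu(\mathbf j) < \infty$; by the first inequality and Fubini the $s$-energy of $\Pi^{\mathbf y}_\ast\nu$ has finite $\mathbb P$-expectation, hence is finite for $\mathbb P$-a.e.\ $\mathbf y$, so $\dim_H \Lambda^{\mathbf y} \ge s$ a.s., and letting $s \uparrow d(T_1,\dots,T_m)$ along a countable sequence gives $\dim_H \Lambda^{\mathbf y} \ge d(T_1,\dots,T_m)$ a.s. For the matching upper bound one covers $\Pi^{\mathbf y}([\mathbf w])$ by the usual controlled number of balls of radius of order $\alpha_d(T_{\mathbf w})$ times the diameter of the ``tail attractor'' $\Lambda^{\mathbf y}_{\mathbf w}$, the only new ingredient being that, by Borel--Cantelli and the polynomial-order tail bounds on $\eta$, $\sup_{\mathbf w \in J_n} \operatorname{diam} \Lambda^{\mathbf y}_{\mathbf w}$ grows subexponentially in $n$ for $\mathbb P$-a.e.\ $\mathbf y$ (a mild elaboration of Lemma \ref{welldefined}); hence for every $s > d(T_1,\dots,T_m)$ the resulting $s$-dimensional sums over $J_n$ tend to $0$ and $\dim_H \Lambda^{\mathbf y} \le d(T_1,\dots,T_m)$ a.s., which proves (1). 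For (2), since $d(T_1,\dots,T_m) > d$ one has $\sum_{i=1}^{m} |\det T_i| > 1$, so the Bernoulli measure $\nu$ on $J_\infty$ defined by $\nu([\mathbf w]) = \phi^{d}(T_{\mathbf w})\big(\textstyle\sum_{i=1}^{m}|\det T_i|\big)^{-|\mathbf w|}$ satisfies $\iint \phi^{d}(T_{\mathbf i \wedge \mathbf j})^{-1}\, d\nu\, d\nu = \sum_{n \ge 0}\big(\sum_{i=1}^{m}|\det T_i|\big)^{-n} < \infty$; then by the second inequality, Fatou applied to the inner integral and then to the expectation, and Fubini,
$$\mathbb E_{\mathbb P}\!\left[\int \liminf_{r \to 0} \frac{\Pi^{\mathbf y}_\ast\nu\big(B(x,r)\big)}{r^{d}}\, d\Pi^{\mathbf y}_\ast\nu(x)\right] \le \liminf_{r \to 0} \frac{1}{r^{d}} \iint \mathbb P\big\{|\Pi^{\mathbf y}(\mathbf i)-\Pi^{\mathbf y}(\mathbf j)| \le r\big\}\, d\nu\, d\nu < \infty,$$
so for $\mathbb P$-a.e.\ $\mathbf y$ the lower $d$-density of $\Pi^{\mathbf y}_\ast\nu$ is finite $\Pi^{\mathbf y}_\ast\nu$-a.e., which by the standard density criterion forces $\Pi^{\mathbf y}_\ast\nu \ll \lambda_d$; since $\Pi^{\mathbf y}_\ast\nu$ is a nonzero measure carried by $\Lambda^{\mathbf y}$, this gives $\lambda_d(\Lambda^{\mathbf y}) > 0$ and proves (2).
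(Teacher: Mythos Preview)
Your proof is correct and follows essentially the same route as the paper: the self-affine transversality bound via the bounded density of $\eta$ and its projections (you phrase it as a convolution bound on the law of $W=U-V$; the paper conditions on every perturbation except $y_{\mathbf w i_n}$ and works directly with $\eta$ --- these are equivalent), then the layer-cake/Falconer energy estimate, then the potential-theoretic lower bound and the Peres--Solomyak density argument for part (2), with the upper bound coming from Borel--Cantelli control on the perturbation sizes.

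Two small points. In your upper-bound sketch the covering balls should have radius of order $\alpha_r(T_{\mathbf w})$ with $r=\lceil s\rceil$, not $\alpha_d(T_{\mathbf w})$; with $\alpha_d$ the cube-count times radius$^{s}$ gives $\alpha_1\cdots\alpha_{d-1}\alpha_d^{\,s-d+1}$, which equals $\phi^s$ only when $d-1<s\le d$, so as written the argument fails for $d(T_1,\dots,T_m)\le d-1$. This is the standard Falconer covering and the paper carries it out exactly this way in Lemma \ref{case 1}, so it is a slip rather than a gap. Second, for part (2) your explicit Bernoulli measure $\nu([\mathbf w])=\phi^d(T_{\mathbf w})\big(\sum_i|\det T_i|\big)^{-|\mathbf w|}$, exploiting that $\phi^d=|\det|$ is multiplicative and that $d(T_1,\dots,T_m)>d$ forces $\sum_i|\det T_i|>1$, is a mild simplification over the paper's use of the measure from Lemma \ref{existence}; note however that your displayed ``$=$'' should be ``$\le$'' (there is a factor accounting for $i_n\ne j_n$), which is harmless.
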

In contrast to Theorem 2 in \cite{tj} this theorem holds when the random perturbations $y_{i_0,\ldots,i_{n-1}}$ are distributed according to a multivariate normal distribution. There are a couple of papers with similar results for random self-similar sets and measures, \cite{k} and \cite{pss} which look at a similar model of randomness for self-similar sets and measures. In \cite{k} random self-similar sets are considered where the translations are fixed but both the contraction rate and the amount of rotation varies randomly. In \cite{pss} measures which contract on average are considered, in this case the translation is fixed and the rate of contraction or expansion varies randomly according to a non-compactly supported distribution. In this case a different form of randomness is considered where there is less independence, the contraction or expansion rates at the $n$th level, $\tau_{i_0,\ldots,i_{n-1}}$, only depend on $i_{n-1}$.  There are also several papers on self-affine sets where the maps which form the iterated function system are chosen randomly at each level, usually based on a discrete distribution. This includes \cite{GL}, \cite{FM} and \cite{JJKKSS}. The work in \cite{GL} can be consider to be random analogues of Bedford-McMullen sets whereas in \cite{FM} and \cite{JJKKSS} the results are random versions of the almost everywhere results in \cite{Falcpaper}.  

The following sections are organised as follows: In Section 2 we show that $\mathbb{P}$-almost surely the attractor $\Lambda^{\mathbf{y}}$ in Definition \ref{attractor3} is well defined. The rest of that section is dedicated to getting a $\mathbb{P}$-almost sure upper bound for the Hausdorff dimension of the attractor. In Section 3 we use potential theoretic methods which follow from ideas of Falconer \cite{Falcpaper} in order to calculate a $\mathbb{P}$-almost sure lower bound.
In Section 4 we comment on some other possible applications of the methods in this note. 

\section{Proof of the upper bound for Theorem \ref{main}}
To prove the upper bound for Theorem \ref{main} we first need to show that $\Lambda^{\mathbf{y}}$ is well defined for $\mathbb{P}$ almost all $\mathbf{y}$. We then need to provide a suitable cover for such $\Lambda^{\mathbf{y}}$. We start with the following lemma which is important for both parts.
\begin{lemma}
For all $0< \theta < 1$, there exists a subset $X \subset Y^{\infty}$ such that $X$ has full measure and for all $\mathbf{y} \in X$ and $n$ sufficiently large we have
$$|y_{i_0,...,i_{n-1}}| \leq \frac{1}{\theta^n}$$
for all $(i_0,...,i_{n-1}) \in J_n$.
\label{bc}
\end{lemma}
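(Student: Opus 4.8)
The plan is to apply the Borel–Cantelli lemma. Fix $0 < \theta < 1$. For each $n$ and each word $(i_0,\dots,i_{n-1}) \in J_n$, consider the ``bad'' event $B_{n,\mathbf{i}} = \{\,|y_{i_0,\dots,i_{n-1}}| > \theta^{-n}\,\}$. By the super-polynomial decay hypothesis in Definition \ref{condition}, for every $k$ there is a constant $c_k$ with $\mathbb{P}(B_{n,\mathbf{i}}) = \eta\{|X| > \theta^{-n}\} \leq c_k \theta^{kn}$. There are $m^n$ words of length $n$, so the probability that \emph{some} word of length $n$ is bad is at most $m^n c_k \theta^{kn} = c_k (m\theta^k)^n$. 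Now choose $k$ large enough that $m \theta^k < 1$ (possible since $\theta < 1$, so $\theta^k \to 0$); then $\sum_{n=1}^\infty c_k (m\theta^k)^n < \infty$.

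By the first Borel–Cantelli lemma, $\mathbb{P}$-almost surely only finitely many of the events $\bigcup_{\mathbf{i} \in J_n} B_{n,\mathbf{i}}$ occur. Equivalently, letting $X$ be the set of $\mathbf{y} \in Y^\infty$ for which this holds, $X$ has full measure, and for each $\mathbf{y} \in X$ there is some $N(\mathbf{y})$ such that for all $n \geq N(\mathbf{y})$ and all $(i_0,\dots,i_{n-1}) \in J_n$ we have $|y_{i_0,\dots,i_{n-1}}| \leq \theta^{-n}$, which is the claim.

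The only step requiring any care is the bookkeeping of indices: the perturbations $y_{i_0,\dots,i_{n-1}}$ are indexed by finite words but, via the enumeration $\varphi$, correspond to distinct coordinates $y_k$ of the product space $Y^\infty$, so they are genuinely i.i.d.\ with law $\eta$ and the union bound over the $m^n$ words of a given length is legitimate. There is no real obstacle here; the content of the lemma is entirely in the fact that super-polynomial decay beats the exponential growth $m^n$ of the number of words once $k$ is chosen large, and the statement is then a routine Borel–Cantelli argument.
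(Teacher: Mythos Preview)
Your proof is correct and follows essentially the same approach as the paper: define the bad event that some length-$n$ perturbation exceeds $\theta^{-n}$, use the union bound over the $m^n$ words together with the super-polynomial tail decay (choosing $k$ so that $m\theta^k<1$) to get summability, and conclude by Borel--Cantelli. The only difference is cosmetic---you introduce the per-word events $B_{n,\mathbf{i}}$ and then take their union, while the paper directly defines the union $A_n$---and your remark about the enumeration $\varphi$ ensuring genuine independence is a helpful clarification the paper leaves implicit.
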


\begin{proof}
We fix $\mathbf{y} \in Y^{\infty}$ and fix $0 < \theta < 1$. Put the event $A_n$ to be
$$A_n := \left\{|y_{i_0,\ldots,i_{n-1}}|> \frac{1}{\theta^n} \textnormal{ for some $(i_0,\ldots,i_{n-1}) \in J_n$}\right\}$$
and observe that
$$\mathbb{P}(A_n)\leq m^n\eta\{y\in\R^d:|y|> \theta^{-n}\}.$$
Fix $k$ such that $\theta^k < m^{-1}$. Then since $\eta$ satisfies Definition \ref{condition}, 
$$m^n\eta\left\{|X|> \frac{1}{\theta^n}\right\} \leq c_k \left(m\theta^k\right)^n.$$
Thus
\begin{displaymath}
\sum_{n=1}^{\infty}\mathbb{P} (A_n) < c_k \sum_{n=1}^{\infty} \left(m\theta^k\right)^n < \infty
\end{displaymath}
since $m\theta^k < 1$. 
Thus by the Borel-Cantelli lemma, for $\mathbb{P}$-almost every $\mathbf{y} \in Y^{\infty}$ the events $A_n$ occurs only finitely often, that is, there exists an $N \in \mathbb{N}$ such that for $n \geq N$, 
$$|y_{i_0,...,i_{n-1}}| \leq \frac{1}{\theta^n}.$$
\end{proof}

We first apply Lemma \ref{bc} to deduce that for $\mathbb{P}$-almost all $\mathbf{y} \in Y^{\infty}$ the attractor $\Lambda^{\mathbf{y}}$ in Definition \ref{attractor3} is well defined. 

\begin{lemma}
For $\mathbb{P}$-almost all $\mathbf{y} \in Y^{\infty}$ and all $\mathbf{i}\in J_{\infty}$ the series (\ref{pi2}) converges, thus the attractor in Definition \ref{attractor3} is well defined.
\label{welldefined}
\end{lemma}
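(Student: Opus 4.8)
The plan is to reduce everything to Lemma \ref{bc} by choosing the parameter $\theta$ carefully. Since $\norm{T}=\max_i\norm{T_i}<1$, I would first fix $\theta$ with $\norm{T}<\theta<1$. Lemma \ref{bc} then produces a full-measure set $X\subset Y^{\infty}$ such that for each $\mathbf{y}\in X$ there is an $N=N(\mathbf{y})$ with $|y_{i_0,\ldots,i_{n-1}}|\leq\theta^{-n}$ for all $n\geq N$ and \emph{all} words $(i_0,\ldots,i_{n-1})\in J_n$. The key point is that $X$ and this bound are independent of the word, so a single full-measure event will handle every $\mathbf{i}\in J_{\infty}$ simultaneously.

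Next, fixing $\mathbf{y}\in X$ and $\mathbf{i}\in J_{\infty}$, I would estimate the terms of the series (\ref{pi2}). Using $\norm{T_{i_0,\ldots,i_{r-1}}}\leq\norm{T}^{r}$, $|a_{i_r}|\leq\norm{a}$, and $|y_{i_0,\ldots,i_r}|\leq\theta^{-(r+1)}$ for all sufficiently large $r$, the $r$-th term is bounded in norm by
$$\norm{T_{i_0,\ldots,i_{r-1}}}\bigl(|a_{i_r}|+|y_{i_0,\ldots,i_r}|\bigr)\leq\norm{a}\,\norm{T}^{r}+\theta^{-1}\Bigl(\tfrac{\norm{T}}{\theta}\Bigr)^{r}.$$
Because $\norm{T}<1$ and $\norm{T}/\theta<1$, both terms on the right decay geometrically in $r$, so the tail of the series converges absolutely; the finitely many remaining terms are each finite. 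Hence the series (\ref{pi2}) — equivalently, the limit (\ref{pi1}), whose partial expressions are exactly the partial sums — converges for every $\mathbf{y}\in X$ and every $\mathbf{i}\in J_{\infty}$, so $\Lambda^{\mathbf{y}}$ of Definition \ref{attractor3} is well defined on the full-measure set $X$.

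There is no real obstacle in this argument; it is a direct application of Lemma \ref{bc} combined with geometric summation. The only two things requiring care are: (i) choosing $\theta$ strictly between $\norm{T}$ and $1$, so that the contraction $\norm{T}^{r}$ dominates the at-most-geometric growth $\theta^{-r}$ of the perturbations; and (ii) exploiting that the bound in Lemma \ref{bc} is uniform over all words of a given length, which is what lets a single full-measure set serve for all $\mathbf{i}\in J_{\infty}$ at once (indeed the same estimate shows the convergence is uniform in $\mathbf{i}$, so $\Pi^{\mathbf{y}}$ is continuous on $J_{\infty}$, though that is not needed here).
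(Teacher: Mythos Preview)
Your proposal is correct and follows essentially the same approach as the paper: fix $\theta$ with $\norm{T}<\theta<1$, invoke Lemma~\ref{bc} to bound $|y_{i_0,\ldots,i_r}|\leq\theta^{-(r+1)}$ uniformly over all words for large $r$, and then dominate the tail of (\ref{pi2}) by the convergent geometric series $\sum\norm{a}\,\norm{T}^r+\theta^{-1}\sum(\norm{T}/\theta)^r$. The paper's proof is essentially this computation, so there is nothing to add.
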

\begin{proof}
 We want to show that 
$$\Pi^{\mathbf{y}}(\mathbf{i}) = a_{i_0} + y_{i_0} + \sum_{r=1}^{\infty}T_{i_0,\ldots, i_{r-1}}(a_{i_r} + y_{i_0,\ldots,i_r})$$
is convergent for $\mathbb{P}$-almost all $\mathbf{y} \in Y^{\infty}$, and all $\mathbf{i}\in J_{\infty}$. It is sufficient to show that
$$|\Pi|^{\mathbf{y}}(\mathbf{i}) := |a_{i_0} + y_{i_0}| + \sum_{r=1}^{\infty}|T_{i_0,\ldots ,i_{r-1}}(a_{i_r} + y_{i_0,\ldots,i_r})|$$
converges. Fix $\norm{T} < \theta < 1$. Then by Lemma \ref{bc} there exists $N \in \mathbb{N}$ such that for $n \geq N$, $(i_0,...,i_{n-1}) \in J_n$ and for $\mathbb{P}$-almost all $\mathbf{y} \in Y^{\infty}$ then $|y_{i_0,...,i_{n-1}}| < \frac{1}{\theta^n}$, in particular
$$|T_{i_0,\ldots ,i_{n-1}}(a_{i_n} + y_{i_0,\ldots,i_n})| \leq \norm{T}^n\left(\norm{a} + \frac{1}{\theta^{n+1}}\right).$$
Thus,
\begin{eqnarray*}
|\Pi|^{\mathbf{y}}(\mathbf{i}) &\leq& \sum_{n=1}^{N-2} \norm{T}^n (\norm{a} + |y_{i_0,\ldots,i_n}|) + \sum_{n=1}^{\infty} \norm{T}^n \left(\norm{a} + \frac{1}{\theta^{n+1}}\right)\\
&\leq& \sum_{n=1}^{N-2} \norm{T}^n (\norm{a} + |y_{i_0,\ldots,i_n}|) + \sum_{n=1}^{\infty} \norm{T}^n \norm{a} + \frac{1}{\theta}\sum_{n=1}^{\infty} \left(\frac{\norm{T}}{\theta}\right)^n
\end{eqnarray*}
which converges since $\frac{\norm{T}}{\theta} < 1$. Thus the series (\ref{pi2}) is absolutely convergent, thus convergent and the attractor is well defined for almost every $\mathbf{y} \in Y^{\infty}$. 
\end{proof}
Throughout the remainder of this section we work with a self-affine IFS of the form (\ref{ifs}) and assume that the perturbation $\mathbf{y}$ is taken from the set of full measure in which the series (\ref{pi2}) converges and thus the attractor $\Lambda^{\mathbf{y}}$ in Definition \ref{attractor3} is well defined.
\begin{lemma}
Let $0<\epsilon<1-\norm{T}$. We then have for all $\norm{T}+ \epsilon \leq \theta <1$ and for $\mathbb{P}$-almost every $\mathbf{y} \in Y^{\infty}$ there exists an $N \in \mathbb{N}$ such that for $n \geq N$, any finite word $(i_0,\ldots, i_{n-1}) \in J_n$ and any $\mathbf{i} \in [i_0,\ldots, i_{n-1}]$ then $\Pi^{\mathbf{y}}(\mathbf{i}) \in T_{i_0,\ldots,i_{n-1}}(B_{\theta, n})$ where $B_{\theta, n}$ is a ball of radius $\frac{C}{\theta^n}$ where $C$ is independent of $n$ and $\theta$.
\label{ub lemma}
\end{lemma}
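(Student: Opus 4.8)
The plan is to peel off the first $n$ perturbed maps from the series (\ref{pi2}): one writes $\Pi^{\mathbf{y}}(\mathbf{i})$ as the image, under an affine map whose linear part is $T_{i_0,\ldots,i_{n-1}}$, of a single ``tail'' point whose norm is at most a fixed multiple of $\theta^{-n}$, and then the required ellipsoid is simply the image of a ball of that radius. The only probabilistic ingredient is Lemma \ref{bc}.

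First fix $\theta$ with $\norm{T}+\epsilon\le\theta<1$, and note that $\norm{T}/\theta\le\norm{T}/(\norm{T}+\epsilon)<1$. Lemma \ref{bc}, applied with this value of $\theta$, gives for $\mathbb{P}$-almost every $\mathbf{y}$ an integer $N=N(\mathbf{y})$ such that $|y_{i_0,\ldots,i_{k-1}}|\le\theta^{-k}$ for every $k\ge N$ and every word of length $k$; this is the $N$ of the statement. (If one wants a single null set valid for all admissible $\theta$ simultaneously --- as is convenient for the cover in the next step --- intersect the full-measure sets obtained by applying Lemma \ref{bc} along a fixed sequence $\theta_j\uparrow 1$, and, given $\theta$, run the argument with any $\theta_j\ge\theta$.)

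Now fix $n\ge N$, a word $\mathbf{w}=(i_0,\ldots,i_{n-1})\in J_n$, and any $\mathbf{i}\in[i_0,\ldots,i_{n-1}]$. Splitting the sum in (\ref{pi2}) at $r=n$ and using the factorisation $T_{i_0,\ldots,i_{r-1}}=T_{i_0,\ldots,i_{n-1}}T_{i_n,\ldots,i_{r-1}}$ for $r\ge n$ (empty products being the identity), one obtains the identity
\begin{displaymath}
\Pi^{\mathbf{y}}(\mathbf{i})=(f_{i_0}+y_{i_0})\circ\cdots\circ(f_{i_{n-1}}+y_{i_0,\ldots,i_{n-1}})(z),\qquad z:=a_{i_n}+y_{i_0,\ldots,i_n}+\sum_{s=1}^{\infty}T_{i_n,\ldots,i_{n+s-1}}(a_{i_{n+s}}+y_{i_0,\ldots,i_{n+s}}).
\end{displaymath}
Here the composition on the right is affine with linear part $T_{i_0,\ldots,i_{n-1}}$, while $z$ is exactly the value of $\Pi$ at the shifted word $(i_n,i_{n+1},\ldots)$ with the perturbations re-indexed accordingly; the key point is that every perturbation appearing in $z$ is labelled by a word of length $>n$, hence of length $\ge N$. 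Using submultiplicativity $\norm{T_{i_n,\ldots,i_{n+s-1}}}\le\norm{T}^{s}$, the bounds $|a_{i_r}|\le\norm{a}$ and $|y_{i_0,\ldots,i_{n+s}}|\le\theta^{-(n+s+1)}$, and summing the two resulting geometric series, one gets
\begin{displaymath}
|z|\;\le\;\frac{\norm{a}}{1-\norm{T}}+\sum_{s=0}^{\infty}\norm{T}^{s}\theta^{-(n+s+1)}\;=\;\frac{\norm{a}}{1-\norm{T}}+\frac{\theta^{-n}}{\theta-\norm{T}}\;\le\;\Big(\frac{\norm{a}}{1-\norm{T}}+\frac1\epsilon\Big)\theta^{-n},
\end{displaymath}
the last step using $\theta-\norm{T}\ge\epsilon$ and $\theta^{-n}\ge1$. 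Thus $C:=\frac{\norm{a}}{1-\norm{T}}+\frac1\epsilon$ is admissible and depends on none of $n$, $\theta$, $\mathbf{w}$, $\mathbf{i}$. Consequently $\Pi^{\mathbf{y}}(\mathbf{i})$ lies in the image of the origin-centred ball of radius $C\theta^{-n}$ under the affine map $(f_{i_0}+y_{i_0})\circ\cdots\circ(f_{i_{n-1}}+y_{i_0,\ldots,i_{n-1}})$, i.e.\ in $T_{i_0,\ldots,i_{n-1}}(B_{\theta,n})$ for a suitable translate $B_{\theta,n}$ of that ball, itself a ball of radius $C\theta^{-n}$; and the same $B_{\theta,n}$ works for every $\mathbf{i}$ extending $\mathbf{w}$.

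I do not anticipate a genuine obstacle: the argument reduces to the reorganisation above together with two geometric series, on top of Lemma \ref{bc}. The two points that need a little care are keeping $C$ independent of $\theta$ --- which is precisely why the hypothesis is $\theta\ge\norm{T}+\epsilon$ rather than merely $\theta<1$ --- and noticing that the finitely many early perturbations $y_{i_0,\ldots,i_{k-1}}$ with $k\le n$, which are not yet controlled by the bound $\theta^{-k}$, enter only through the translation part of the affine map (equivalently, through the centre of $B_{\theta,n}$) and hence never affect its radius.
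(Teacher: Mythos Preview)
Your proof is correct and follows essentially the same route as the paper: invoke Lemma~\ref{bc}, factor $T_{i_0,\ldots,i_{n-1}}$ out of the series~(\ref{pi2}), and bound the remaining tail by two geometric series, using $\theta-\norm{T}\ge\epsilon$ to keep the constant independent of $\theta$. The only cosmetic difference is that the paper bounds the difference $|z_{\mathbf{i}}-z_{\mathbf{j}}|$ of two tail points (picking up an extra factor of $2$) and then centres the ball at one of them, whereas you bound a single tail point $|z|$ directly and centre the ball at the origin; both yield a ball of radius $O(\theta^{-n})$ depending only on the cylinder.
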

\begin{proof}
By Lemma \ref{bc} there exists a set of full measure $X \subset Y^{\infty}$ such that for some $N \in \mathbb{N}$ and all $n \geq N$, and all $\mathbf{y} \in X$, then $|y_{i_0,\ldots,i_{n-1}} | < \frac{1}{\theta^n}$ for all $(i_0,\ldots,i_{n-1})\in\{1,\ldots,m\}^n$. Fix $n \geq N$ and a cylinder $[i_0,\ldots, i_{n-1} ]$. Suppose $\mathbf{i}, \mathbf{j}\in [i_0,\ldots, i_{n-1} ]$ and let $\Pi^{\mathbf{y}}_{\mathbf{i},\mathbf{j}}:= |\Pi^{\mathbf{y}}(\mathbf{i}) - \Pi^{\mathbf{y}}(\mathbf{j})|$. We then get
\begin{eqnarray*}
&\Pi^{\mathbf{y}}_{\mathbf{i},\mathbf{j}}& =\left|\left(a_{i_0} + y_{i_0} + \sum_{k=0}^{\infty}T_{i_0,\ldots,i_k}(a_{i_{k+1}}+y_{i_0,\ldots,i_{k+1}})\right)\right.\\
 &&-\left.\left(a_{j_0} + y_{j_0} + \sum_{k=0}^{\infty}T_{i_0,\ldots,i_k}(a_{i_{k+1}}+y_{j_0,\ldots,j_{k+1}})\right) \right|\\
&=&\left|T_{i_0,\ldots,i_{n-1}} \left(\left(a_{i_n} + y_{i_0,\ldots,i_n}+\sum_{k=n}^{\infty}T_{i_n,\ldots,i_k}(a_{i_{k+1}}+y_{i_0,\ldots,i_{k+1}})\right)\right.\right.\\
&-&\left.\left.\left(a_{j_n} + y_{j_0,\ldots,j_n}+\sum_{k=n}^{\infty}T_{j_n,\ldots,j_k}(a_{j_{k+1}}+y_{j_0,\ldots,j_{k+1}})\right)\right)\right|. 
\end{eqnarray*}
Observe that since $\norm{T}<\theta$
\begin{eqnarray}
&&\left|\left(a_{i_n} + y_{i_0,\ldots,i_n}+\sum_{k=n}^{\infty}T_{i_n,\ldots,i_k}(a_{i_{k+1}}+y_{i_0,\ldots,i_{k+1}})\right)\right. \notag\\ 
&-&\left.\left(a_{j_n} + y_{j_0,\ldots,j_n}+\sum_{k=n}^{\infty}T_{j_n,\ldots,j_k}(a_{j_{k+1}}+y_{j_0,\ldots,j_{k+1}})\right)\right| \notag\\ 
&\leq& 2(\norm{a} + \frac{1}{\theta^{n+1}}) + 2\norm{T}(\norm{a} + \frac{1}{\theta^{n+2}}) + \cdots \notag\\ 
&=& \frac{2\norm{a}}{1- \norm{T}} + \frac{2}{\theta^{n+1}}(1 + \frac{\norm{T}}{\theta} + \cdots)\notag\\ 
&\leq&\frac{2\norm{a}}{1- \norm{T}} + \frac{2}{\theta^{n+1}}\frac{1}{1-\frac{\norm{T}}{\theta}} \leq \frac{2\norm{a}}{1- \norm{T}} + \frac{2}{\theta^{n+1}}\frac{1}{1-\frac{\norm{T}}{\norm{T}+ \epsilon}}\notag\\ 
& \leq& \frac{2 \norm{a}}{1- \norm{T}} + \frac{2}{\theta^n}\frac{1}{\norm{T}(1- \frac{\norm{T}}{\norm{T}+ \epsilon})}. 
\label{r}
\end{eqnarray}

Thus we can fix $\mathbf{i} \in [i_0,...,i_{n-1}]$, and then for any $\mathbf{j} \in [i_0,...,i_{n-1}]$ we have $\Pi^{\mathbf{y}}(\mathbf{j}) \in T_{i_0,...,i_{n-1}}(B)$ where $B= B(\Pi^{\mathbf{y}}(\mathbf{i}), r_{\theta, n})$, where $r_{\theta, n}$ is given by (\ref{r}).  Put 
$$C'= \max\left\{{\frac{2}{\norm{T}(1- \frac{\norm{T}}{\norm{T}+ \epsilon})}, \frac{2\norm{a}}{1- \norm{T}}}\right\}$$ which is clearly independent of $\theta$ and $n$. Then 
$$r_{\theta, n} \leq C'\left(1 + \frac{1}{\theta^n}\right) = C'\left(\frac{\theta^n +1}{\theta^n}\right) \leq 2C'\frac{1}{\theta^n}$$ Putting $C= 2C'$ and setting $B_{\theta, n}= B(\Pi^{\mathbf{y}}(\mathbf{i}), \frac{C}{\theta^n})$ proves the result.
\end{proof}

In Proposition 4.1 in \cite{Falcpaper} it is shown that the limit
$$\lim_{n \to \infty} \left( \sum_{J_n} \phi^s(T_{\mathbf{i}}) \right)^{\frac{1}{n}} $$
is continuous in $s$ and decreasing. This enables us to make the following definition.  
\begin{defi}
Suppose $d(T_1,\ldots,T_m) <d$. Then
$$\lim_{n \to \infty}\left(\sum_{J_n} \phi^d(T_{\mathbf{i}})\right)^{\frac{1}{n}}= t< 1$$ so we can define a monotone increasing sequence $\{\theta_k\}_{k \in \mathbb{N}}$ where\\
 $\lim_{k \to \infty}\theta_k = 1$ such that $\max\{{\norm{T}+ \epsilon, t}\} <\theta_k <1$ for all $k$. Then define $\{s_k\}_{k \in \mathbb{N}}$ to be the monotone decreasing sequence where for each $k$, $d(T_1,\ldots,T_m) < s_k <d$ is defined to be such that $$\lim_{n \to \infty}\left(\sum_{J_n} \phi^{s_k}(T_{\mathbf{i}})\right)^{\frac{1}{n}}= \theta_k^d$$ 
\label{sk}
\end{defi}

\begin{lemma}
The limit of the sequence $\{s_k\}_{k \in \mathbb{N}}$ exists and
$\displaystyle\lim_{k \to \infty}{s_k} = s= d(T_1,\ldots,T_m)$.
\label{sk limit}
\end{lemma}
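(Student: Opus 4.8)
The plan is to read off the monotonicity of $\{s_k\}$ that is built into Definition \ref{sk}, and then pin down the limit using the continuity and strict monotonicity of the function
$$P(s) := \lim_{n\to\infty}\Big(\sum_{J_n}\phi^s(T_{\mathbf i})\Big)^{1/n},$$
whose existence, continuity and monotonicity are exactly Proposition 4.1 of \cite{Falcpaper}. First I would observe that, by construction in Definition \ref{sk}, the sequence $\{s_k\}$ is monotone decreasing and satisfies $s_k > d(T_1,\ldots,T_m)$ for every $k$; hence the limit $s^\ast := \lim_{k\to\infty}s_k$ exists and $s^\ast \geq d(T_1,\ldots,T_m)$. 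It therefore only remains to exclude strict inequality.

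Next I would evaluate $P$ along the sequence. Since $P$ is continuous, $P(s^\ast) = \lim_{k\to\infty}P(s_k) = \lim_{k\to\infty}\theta_k^{\,d}$ by the defining relation of $s_k$ in Definition \ref{sk}. As $\theta_k\nearrow 1$ and $d$ is a fixed integer, $\theta_k^{\,d}\to 1$, so $P(s^\ast)=1$.

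Finally I would use that $P$ is strictly decreasing: for $s<s'$ and $\mathbf i\in J_n$ one has the standard estimate $\phi^{s'}(T_{\mathbf i}) \leq \norm{T_{\mathbf i}}^{\,s'-s}\phi^{s}(T_{\mathbf i}) \leq \norm{T}^{\,n(s'-s)}\phi^{s}(T_{\mathbf i})$, whence $P(s') \leq \norm{T}^{\,s'-s}P(s) < P(s)$. Consequently the equation $P(s)=1$ has exactly one solution, which by the equivalent characterisation of the affinity dimension in Definition \ref{s} is $d(T_1,\ldots,T_m)$. Combining this with $P(s^\ast)=1$ gives $s^\ast = d(T_1,\ldots,T_m)$, as required. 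I do not expect a genuine obstacle here; the only step needing a little care is the passage from "$s^\ast \geq d(T_1,\ldots,T_m)$'' to equality, and that is precisely where continuity of $P$ together with its strict monotonicity (or, equivalently, the uniqueness asserted in Definition \ref{s}) is used.
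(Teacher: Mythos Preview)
Your proposal is correct and follows essentially the same approach as the paper: both arguments use that $\{s_k\}$ is monotone decreasing and bounded below by $d(T_1,\ldots,T_m)$, together with the properties of $P(s)=\lim_{n\to\infty}\big(\sum_{J_n}\phi^s(T_{\mathbf i})\big)^{1/n}$ (continuous and decreasing, as recorded just before Definition~\ref{sk}) and the relation $P(s_k)=\theta_k^d\to 1$. The only cosmetic difference is that you invoke continuity of $P$ directly to pass to $P(s^\ast)=1$ and then appeal to strict monotonicity for uniqueness, whereas the paper unpacks the same step as an explicit $\epsilon$--$\delta$ argument (choosing $\delta$ with $P(s+\epsilon)=1-\delta$ and $k$ large enough that $\theta_k^d>1-\delta$, then using monotonicity to force $s_k<s+\epsilon$).
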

\begin{proof}
Since $\{s_k\}_{k \in \mathbb{N}}$ is a monotone decreasing sequence that is bounded below by $d(T_1, ..., T_m)$ it converges, so its limit exists. Now, let $\epsilon > 0$ and define $\delta>0$ to satisfy
$$\lim_{n \to \infty}\left(\sum_{J_n}\phi^{s + \epsilon}(T_{\mathbf{i}})\right)^{\frac{1}{n}}= 1- \delta.$$
Choose $N$ such that for $k \geq N$ then $|1 -\theta_k^d|< \delta$ which is possible since $\lim_{k\to\infty}\theta_k=1$ and thus $\lim_{k\to\infty}\theta_k^d=1$. By the definition of $s_k$
$$\lim_{n \to \infty}\left(\sum_{J_n}\phi^{s_k}(T_{\mathbf{i}})\right)^{\frac{1}{n}}= \theta_k^d > 1 - \delta= \lim_{n \to \infty}\left(\sum_{J_n}\phi^{s + \epsilon}(T_{\mathbf{i}})\right)^{\frac{1}{n}}.$$
Since $$\lim_{n \to \infty}\left(\sum_{J_n}\phi^{r}(T_{\mathbf{i}})\right)^{\frac{1}{n}}$$ is decreasing in $r$, it follows that $s_k < s + \epsilon$, that is,  $|s_k - s| < \epsilon$. 
\end{proof}
\begin{lemma}
Consider a self-affine IFS of the form (\ref{ifs}) and the attractor $\Lambda^{\mathbf{y}}$ as in Definition \ref{attractor3}.  Then $\mathbb{P}$-almost surely $\mathrm{dim_H(\Lambda^{\mathbf{y}})} \leq d(T_1, \ldots, T_m)$.
\label{case 1}
\end{lemma}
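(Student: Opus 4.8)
The plan is to build, for $\mathbb{P}$-almost every $\mathbf{y}$ and each $k\in\mathbb{N}$, a cover of $\Lambda^{\mathbf{y}}$ at scales tending to $0$ whose $s_k$-dimensional sum also tends to $0$; this gives $\mathrm{dim_H}(\Lambda^{\mathbf{y}})\le s_k$, and then one lets $k\to\infty$ via Lemma \ref{sk limit}. First dispose of the trivial case: if $d(T_1,\ldots,T_m)\ge d$ then $\mathrm{dim_H}(\Lambda^{\mathbf{y}})\le d\le d(T_1,\ldots,T_m)$ since $\Lambda^{\mathbf{y}}\subset\mathbb{R}^d$, so one may assume $d(T_1,\ldots,T_m)<d$ and work with the sequences $\{\theta_k\}$, $\{s_k\}$ of Definition \ref{sk} (recall $\max\{\norm{T}+\epsilon,t\}<\theta_k<1$ and $d(T_1,\ldots,T_m)<s_k<d$). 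Fix $k$. By Lemma \ref{ub lemma} there is a full-measure set of $\mathbf{y}$ and an $N$ so that for every $n\ge N$ the family $\{T_{i_0,\ldots,i_{n-1}}(B_{\theta_k,n})\}_{(i_0,\ldots,i_{n-1})\in J_n}$ covers $\Lambda^{\mathbf{y}}$, where $B_{\theta_k,n}$ is a ball of radius $C\theta_k^{-n}$ with $C$ independent of $n$ and $k$. Each member of this family is an ellipsoid with principal semi-axes $\beta_j:=C\theta_k^{-n}\,\alpha_j(T_{i_0,\ldots,i_{n-1}})$, $j=1,\ldots,d$, ordered $\beta_1\ge\cdots\ge\beta_d>0$ (the $T_i$ being invertible).

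The second step is the standard geometric observation (as in \cite{Falcpaper}) that, writing $r-1<s_k\le r$, an ellipsoid with semi-axes $\beta_1\ge\cdots\ge\beta_d$ can be covered by at most $2^{r-1}\prod_{j=1}^{r-1}(\beta_j/\beta_r)$ sets of diameter at most $2\sqrt{d}\,\beta_r$ (partition the circumscribing box into cubes of side $2\beta_r$). Since $\phi^{s_k}(cT)=c^{s_k}\phi^{s_k}(T)$, the contribution of the word $(i_0,\ldots,i_{n-1})$ to the $s_k$-sum is then at most
$$2^{r-1}\Bigl(\prod_{j=1}^{r-1}\tfrac{\beta_j}{\beta_r}\Bigr)\bigl(2\sqrt{d}\,\beta_r\bigr)^{s_k}=2^{r-1}\bigl(2\sqrt d\bigr)^{s_k}\beta_1\cdots\beta_{r-1}\beta_r^{s_k-r+1}=C_1\,\theta_k^{-ns_k}\,\phi^{s_k}(T_{i_0,\ldots,i_{n-1}})$$
for a constant $C_1$ depending only on $d$, $r$, $s_k$ and $C$. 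Crucially, every set in this cover has diameter at most $2\sqrt{d}\,\beta_r\le 2\sqrt{d}\,C\,(\norm{T}/\theta_k)^n$, using $\alpha_r(T_{i_0,\ldots,i_{n-1}})\le\norm{T_{i_0,\ldots,i_{n-1}}}\le\norm{T}^n$; this tends to $0$ as $n\to\infty$ because $\norm{T}<\theta_k$.

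Summing over $J_n$, the total $s_k$-sum of our cover of $\Lambda^{\mathbf{y}}$ is at most $C_1\,\theta_k^{-ns_k}\sum_{J_n}\phi^{s_k}(T_{i_0,\ldots,i_{n-1}})$. By the defining property of $(\theta_k,s_k)$ in Definition \ref{sk} we have $\bigl(\sum_{J_n}\phi^{s_k}(T_{\mathbf{i}})\bigr)^{1/n}\to\theta_k^d$; and since $0<\theta_k<1$ and $s_k<d$ force $\theta_k^{s_k}>\theta_k^d$, we may fix $\varepsilon'>0$ with $\theta_k^d+\varepsilon'<\theta_k^{s_k}$, so that for all large $n$ the total $s_k$-sum is at most $C_1\bigl((\theta_k^d+\varepsilon')/\theta_k^{s_k}\bigr)^n\to 0$. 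Since the covering scale $2\sqrt d\,C(\norm{T}/\theta_k)^n$ also tends to $0$, this yields $\mathcal{H}^{s_k}(\Lambda^{\mathbf{y}})=0$ and hence $\mathrm{dim_H}(\Lambda^{\mathbf{y}})\le s_k$ for $\mathbb{P}$-almost every $\mathbf{y}$. Intersecting the corresponding full-measure sets over $k\in\mathbb{N}$ and invoking $\lim_k s_k=d(T_1,\ldots,T_m)$ from Lemma \ref{sk limit} completes the argument.

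The main obstacle, and the genuinely new point compared with the compactly supported setting of \cite{tj}, is that the cover furnished by Lemma \ref{ub lemma} uses sets whose linear size $C\theta_k^{-n}$ grows geometrically in $n$ rather than staying $O(1)$, which is what inflates the $s_k$-sum by the factor $\theta_k^{-ns_k}$. This loss is absorbed precisely because the exponent may be chosen strictly below $d$: the exponential gain $\theta_k^{n(d-s_k)}$ coming from $\sum_{J_n}\phi^{s_k}\approx\theta_k^{dn}$ beats $\theta_k^{-ns_k}$, while the built-in condition $\norm{T}<\theta_k$ keeps the covering scale shrinking. Getting these two competing exponential rates on the correct side of $1$, uniformly over all words of length $n$, is the only delicate bookkeeping in the proof.
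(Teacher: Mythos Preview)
Your proof is correct and follows essentially the same route as the paper's: dispose of the case $d(T_1,\ldots,T_m)\ge d$, invoke the ellipsoidal cover from Lemma~\ref{ub lemma}, subdivide into cubes of side comparable to $\alpha_r(T_{\mathbf{i}})\theta_k^{-n}$, sum the resulting $\phi^{s_k}$ contributions, and balance the growth $\theta_k^{-ns_k}$ against $\sum_{J_n}\phi^{s_k}\sim\theta_k^{nd}$ before letting $k\to\infty$ via Lemma~\ref{sk limit}. Your handling of the final limit is in fact slightly cleaner than the paper's: by exploiting the \emph{strict} inequality $s_k<d$ to insert an $\varepsilon'$ with $\theta_k^d+\varepsilon'<\theta_k^{s_k}$, you obtain $\mathcal{H}^{s_k}(\Lambda^{\mathbf{y}})=0$ rigorously, whereas the paper's line ``$\lim_{n\to\infty}\theta_k^{-nd}\sum_{J_n}\phi^{s_k}(T_{\mathbf{i}})=\lim_{n\to\infty}\theta_k^{-nd}\theta_k^{nd}$'' tacitly ignores subexponential corrections in $\bigl(\sum_{J_n}\phi^{s_k}\bigr)^{1/n}\to\theta_k^d$.
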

\begin{proof}
First of all we note that if  $d(T_1,...,T_m) \geq d$ then trivially $\mathrm{dim_H(\Lambda^{\mathbf{y}}) } \leq d$. So we just consider the case where $d(T_1,...,T_m) < d$. Take the sequences $\{\theta_k\}_{k \in \mathbb{N}}$ and $\{s_k\}_{k \in \mathbb{N}}$ as in Definition \ref{sk}. By Lemma \ref{bc} we can find a set $\mathbf{Y}_k \subseteq Y^{\infty}$ of full measure such that $|y_{i_0,\ldots,i_{n-1}}|<\theta_k^n$ for all $(i_0,\ldots,i_{n-1}) \in J_n$ where $n$ is sufficiently large. Put 

$$\mathbf{Y}= \bigcap_{k=1}^{\infty}\mathbf{Y}_k$$ 
which is therefore also a set of full measure. Next fix $k$, and $\mathbf{y} \in \mathbf{Y}$ and let $n$ be sufficiently large so that $y_{i_0,\ldots,i_{n-1}} <\theta_k^n$ for all $\mathbf{i}=(i_0,\ldots,i_{n-1}) \in J_n$. 
We know by Lemma \ref{ub lemma} that for all $\mathbf{j} \in [i_0, \ldots, i_{n-1}]$, $\Pi^{\mathbf{y}}(\mathbf{j})$ is contained in the ball $T_{i_0,\ldots,i_{n-1}}(B_{\theta_k, n})$ which is contained in a parallelepiped with sides of length $\frac{4C}{\theta_k^n}\alpha_1(T_{\mathbf{i}}), \ldots, \frac{4C}{\theta_k^n}\alpha_{d}(T_{\mathbf{i}})$. We let $r=\inf\{z\in\mathbb{Z}:z\geq s_k\}$. 
We can thus divide this parallelepiped into at most $\frac{\alpha_{1}(T_{\mathbf{i}})}{\alpha_{r}(T_{\mathbf{i}})} \cdots\frac{\alpha_{r-1}(T_{\mathbf{i}})}{\alpha_{r}(T_{\mathbf{i}})}$ cubes of side $\alpha_{r}(T_{\mathbf{i}})\frac{4C}{\theta_k^n}$. Thus we take such a collection of cubes for each $\mathbf{i} \in J_n$ as a cover for $\Lambda^{\mathbf{y}}$ where $\mathbf{y} \in \mathbf{Y}_k$. Putting $\delta_n = \max_{\mathbf{i} \in J_n} \{\alpha_{r}(T_{\mathbf{i}})\frac{4C}{\theta_k^n}\}$ we get that
\begin{eqnarray*}
\mathcal{H}^{s_k}_{\delta_n}(\Lambda^{\mathbf{y}}) &\leq& \sum_{\mathbf{i} \in J_n}\frac{(4C)^{s_k}}{\theta_k^{ns_k}}\alpha_{1}(T_{\mathbf{i}})\cdots\alpha_{r-1}(T_{\mathbf{i}})(\alpha_{r}(T_{\mathbf{i}}))^{s_k-r+1}\\
 &=&\sum_{\mathbf{i} \in J_n}\frac{(4C)^{s_k}}{\theta_k^{ns_k}}\phi^{s_k}(T_{\mathbf{i}}) \leq \sum_{\mathbf{i} \in J_n}\frac{(4C)^{s_k}}{\theta_k^{nd}}\phi^{s_k}(T_{\mathbf{i}})
\end{eqnarray*}
since $s_k \leq d$. Letting $n \to \infty$ we get
\begin{displaymath}
\mathcal{H}^{s_k}(\Lambda^{\mathbf{y}}) \leq \lim_{n \to \infty}\frac{(4C)^{s_k}}{\theta_k^{nd}}\sum_{\mathbf{i} \in J_n}\phi^{s_k}(T_{\mathbf{i}}) = \lim_{n \to \infty}\frac{(4C)^{s_k}}{\theta_k^{nd}} \theta_k^{nd}
\end{displaymath}
by definition of $s_k$. So 
\begin{displaymath}
\mathcal{H}^{s_k}(\Lambda^{\mathbf{y}}) \leq (4C)^{s_k} < \infty
\end{displaymath}
and thus $\mathrm{dim_H(\Lambda^{\mathbf{y}})} \leq s_k$ for $\mathbf{y} \in \mathbf{Y}_k$. This means that for any $\mathbf{y}\in \mathbf{Y}$ $\mathrm{dim_H(\Lambda^{\mathbf{y}})} \leq s_k$ and since $s_k$ is a decreasing sequence it follows that $\mathrm{dim_H(\Lambda^{\mathbf{y}})} \leq \inf\{s_k : k \in \mathbb{N}\} = \lim_{n \to \infty} s_k = s$. Thus  $\mathrm{dim_H(\Lambda^{\mathbf{y}})} \leq d(T_1,\ldots,T_m)$ $\mathbb{P}$-almost surely. 
\end{proof}

\section{Proof of the lower bound for Theorem \ref{main}}
The proof of the lower bound for Theorem \ref{main} is fairly similar to the proof of Theorem \ref{JPS} given in \cite{tj}. In particular we use the same method of showing that the self-affine transversality condition holds. However there are some differences in the argument since the perturbations are now distributed according to a measure which may not be compactly supported. In particular, Lemmas \ref{c} is proved in the same way as Lemma 4.5 in \cite{tj} and \ref{lower bound} and \ref{lebesgue} can be deduced from Lemma \ref{trans} in the same way as Proposition 4.4 is proved in \cite{tj}, we give the details here for completeness. One difference is that the projection may not be convergent for all $\mathbf{y}\in Y^{\infty}$.  To overcome this problem we assume random perturbations are in the space $X=X_{\theta}$ defined in the statement of Lemma \ref{bc} where $\theta>\norm{T}$ rather than the whole space $Y^{\infty}$. In this way it follows by Lemma \ref{bc} that this set has full measure and by Lemma \ref{welldefined} that $\Pi^{\mathbf{y}}$ is well defined for all $\mathbf{y}\in X$. This definition also ensures that there are no issues with measurability when we apply Fubini's Theorem. 
\begin{lemma}
There exists a finite measure $\mu$ supported on $J_{\infty}$ and a constant $c'$ such that if $s < d(T_1,\ldots, T_m) $ then
\begin{displaymath}
\mu([\omega]) \leq c' \phi^s(T_{\omega})
\end{displaymath}
for every finite word $\omega \in J$.
\label{existence}
\end{lemma}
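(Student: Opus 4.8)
The plan is to produce $\mu$ as a weak-$*$ limit of the natural distributions that spread unit mass over the words of a given length in proportion to $\phi^{s}$, and then read off the cylinder estimate from the submultiplicativity of the singular value function; the hypothesis $s<d(T_{1},\ldots,T_{m})$ will be used only through the cheap fact that the level sums never drop below $1$.

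Fix $s<d(T_{1},\ldots,T_{m})$ and put $b_{n}:=\sum_{\omega\in J_{n}}\phi^{s}(T_{\omega})$, so $b_{0}=\phi^{s}(I)=1$ and each $b_{n}$ is finite and strictly positive (the $T_{\omega}$ are invertible). Recall that $\phi^{s}$ is submultiplicative, $\phi^{s}(AB)\le\phi^{s}(A)\phi^{s}(B)$ (see \cite{Falcpaper}), so $b_{n+p}\le b_{n}b_{p}$ for all $n,p\ge 0$. The first thing I would check is that $b_{n}\ge 1$ for every $n$: if $b_{n_{0}}<1$ for some $n_{0}$, iterating submultiplicativity gives $b_{kn_{0}+r}\le b_{n_{0}}^{\,k}b_{r}$, hence $\sum_{n\ge 0}b_{n}\le(1-b_{n_{0}})^{-1}\sum_{r=0}^{n_{0}-1}b_{r}<\infty$; but for $s<d(T_{1},\ldots,T_{m})$ we have $\sum_{n\ge 0}\sum_{\omega\in J_{n}}\phi^{s}(T_{\omega})=\infty$ by Definition~\ref{s}, a contradiction.

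Next, for each $n$ let $\mu^{(n)}$ be a Borel probability measure on $J_{\infty}$ with $\mu^{(n)}([\omega])=\phi^{s}(T_{\omega})/b_{n}$ for every $\omega\in J_{n}$ (for instance the atomic measure $b_{n}^{-1}\sum_{\omega\in J_{n}}\phi^{s}(T_{\omega})\,\delta_{\widehat{\omega}}$, with $\widehat{\omega}$ any infinite extension of $\omega$). For a word $\omega$ of length $k\le n$, summing over its length-$n$ extensions $\omega\sigma$ and using submultiplicativity,
$$\mu^{(n)}([\omega])=\frac{1}{b_{n}}\sum_{\sigma\in J_{n-k}}\phi^{s}(T_{\omega}T_{\sigma})\le\frac{\phi^{s}(T_{\omega})}{b_{n}}\sum_{\sigma\in J_{n-k}}\phi^{s}(T_{\sigma})=\phi^{s}(T_{\omega})\,\frac{b_{n-k}}{b_{n}},$$
so everything reduces to controlling $b_{n-k}/b_{n}$ along a good subsequence. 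Set $B:=\limsup_{n}b_{n}\in[1,\infty]$. If $B<\infty$, pick $n_{j}\to\infty$ with $b_{n_{j}}\to B$; then for each fixed $k$ we have $\limsup_{j}b_{n_{j}-k}\le B$ (since $n_{j}-k\to\infty$) while $b_{n_{j}}\to B>0$, so $\limsup_{j}b_{n_{j}-k}/b_{n_{j}}\le 1$. If $B=\infty$, then $b_{n}$ is unbounded, so there are infinitely many record indices $n_{1}<n_{2}<\cdots$ with $b_{n_{j}}>\max_{0\le m<n_{j}}b_{m}$, and then $b_{n_{j}-k}\le b_{n_{j}}$ for $0\le k\le n_{j}$. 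In either case, after passing to a further subsequence, $\mu^{(n_{j})}$ converges weak-$*$ to a Borel probability measure $\mu$ on the compact space $J_{\infty}$; since cylinders are clopen, $\mu([\omega])=\lim_{j}\mu^{(n_{j})}([\omega])$ for every $\omega\in J$, and letting $j\to\infty$ in the displayed estimate (with $n=n_{j}$) gives $\mu([\omega])\le\phi^{s}(T_{\omega})$, which is the lemma with $c'=1$.

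The step I expect to be the actual obstacle is this last one: because $\phi^{s}$ is only submultiplicative and not multiplicative, the combinatorial levels $J_{n}$ are not the metrically natural scales, and along a careless sequence $n\to\infty$ the ratio $b_{n-k}/b_{n}$ — hence the bound $\mu([\omega])\le\phi^{s}(T_{\omega})b_{n-k}/b_{n}$ — could grow exponentially in $k=|\omega|$. Choosing $n_{j}$ so that $b_{n_{j}}$ sits near its upper limit (or is a running record), which is precisely where $b_{n}\ge 1$ and hence the hypothesis $s<d(T_{1},\ldots,T_{m})$ enter, is what repairs this.
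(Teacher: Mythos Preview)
Your argument is correct. The paper itself does not prove this lemma at all; it simply cites Lemma~3 of \cite{tj}, so there is no in-house argument to compare against. The construction you give --- normalising $\phi^{s}$ over level-$n$ words, extracting a weak-$*$ limit along a subsequence on which the level sums $b_{n}$ sit near their supremum (or are running records when $b_{n}$ is unbounded), and using submultiplicativity together with $b_{n}\ge 1$ to force $b_{n-k}/b_{n}\le 1$ along that subsequence --- is one of the standard direct routes to this result, and your identification of the ratio $b_{n-k}/b_{n}$ as the only genuine obstacle is exactly right. One small remark on quantifiers: the lemma as phrased appears to promise a single $\mu$ and $c'$ valid for every $s<d(T_{1},\ldots,T_{m})$, whereas your $\mu$ depends on $s$; this is immaterial for the applications in Section~3 (Lemmas~\ref{lower bound} and~\ref{lebesgue}), where $s$ is fixed before $\mu$ is chosen, and in fact your argument delivers the bound with $c'=1$.
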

\begin{proof}
 See, for example, Lemma 3 in \cite{tj}.
\end{proof}
The following definition was used in \cite{tj} to introduce a self-affine transversality condition.
\begin{defi}
For fixed $\mathbf{i},\mathbf{j}\in J_{\infty}$ define $Z_{\mathbf{i} \wedge \mathbf{j}}:[0,\infty)\to [0,1]$ by
$$Z_{\mathbf{i} \wedge \mathbf{j}}(\rho) := \prod_{k=1}^{d} \frac{min\{\rho, \alpha_k(T_{\mathbf{i} \wedge \mathbf{j}})\}}{\alpha_k(T_{\mathbf{i} \wedge \mathbf{j}})}.$$
\label{z}
\end{defi}

\begin{lemma}
The self-affine transversality condition from \cite{tj} holds for the measure $\mathbb{P}$. That is, there exists $C>0$ such that for all $\mathbf{i},\mathbf{j}\in J_{\infty}$,  
\begin{equation}\label{sat}
\mathbb{P}\{\mathbf{y} \in X : |\Pi^{\mathbf{y}}(\mathbf{i}) - \Pi^{\mathbf{y}}(\mathbf{j})| < \rho\} < C \cdot Z_{\mathbf{i} \wedge \mathbf{j}}(\rho).
\end{equation}
\label{trans}
\end{lemma}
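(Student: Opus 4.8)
The plan is to reduce the event $\{|\Pi^{\mathbf{y}}(\mathbf{i}) - \Pi^{\mathbf{y}}(\mathbf{j})| < \rho\}$ to a statement about a single perturbation variable, exploiting the independence built into $\mathbb{P}$. Fix $\mathbf{i},\mathbf{j}\in J_{\infty}$ and let $\mathbf{i}\wedge\mathbf{j}=(i_0,\ldots,i_{n-1})$ be their common prefix, so that $i_n\neq j_n$. As in the computation in the proof of Lemma \ref{ub lemma}, factor out $T_{i_0,\ldots,i_{n-1}}$ from the difference $\Pi^{\mathbf{y}}(\mathbf{i})-\Pi^{\mathbf{y}}(\mathbf{j})$, writing
$$\Pi^{\mathbf{y}}(\mathbf{i})-\Pi^{\mathbf{y}}(\mathbf{j}) = T_{\mathbf{i}\wedge\mathbf{j}}\bigl(u(\mathbf{y}) - v(\mathbf{y}) + (y_{i_0,\ldots,i_{n-1},i_n} - y_{j_0,\ldots,j_{n-1},j_n})\bigr)$$
where $u(\mathbf{y})$ collects the tail corresponding to $\mathbf{i}$ beyond level $n$ (starting with $a_{i_n}$ and the further $T$-contracted terms) and $v(\mathbf{y})$ the analogous tail for $\mathbf{j}$. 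The two perturbation variables $y_{i_0,\ldots,i_{n-1},i_n}$ and $y_{j_0,\ldots,j_{n-1},j_n}$ carry distinct multi-indices (since $i_n\neq j_n$), hence are independent of each other and of all the remaining variables appearing in $u$ and $v$ except themselves; in fact I would single out just $w:=y_{i_0,\ldots,i_{n-1},i_n}$, condition on everything else, and absorb the rest into a constant vector $b=b(\mathbf{y})\in\mathbb{R}^d$, so that the event becomes $\{|T_{\mathbf{i}\wedge\mathbf{j}}(w + b)| < \rho\}$ for a $w$ distributed as $\eta$ and independent of $b$.

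The core estimate is then: for \emph{any} fixed $b\in\mathbb{R}^d$ and any invertible $d\times d$ matrix $T$ with singular values $\alpha_1\geq\cdots\geq\alpha_d$,
$$\eta\{w\in\mathbb{R}^d : |T(w+b)| < \rho\}\ \leq\ C\prod_{k=1}^d \frac{\min\{\rho,\alpha_k\}}{\alpha_k} = C\, Z(\rho),$$
with $C$ depending only on $\eta$ (through the bound $K$ on the densities of all orthogonal projections of $\eta$, guaranteed by the property following Definition \ref{condition}) and not on $T$, $b$, or $\rho$. To see this, use the singular value decomposition $T=O_1\,\mathrm{diag}(\alpha_1,\ldots,\alpha_d)\,O_2$; after the orthogonal change of variable the set $\{|T(w+b)|<\rho\}$ becomes contained in a box (a preimage of a ball under a diagonal map), namely $w+b$ lies, in the rotated coordinates given by $O_2$, in a region contained in a product of intervals of lengths $2\min\{\rho/\alpha_k,\,\text{something}\}$; more precisely $|T\xi|<\rho$ forces $|\langle O_2\xi, e_k\rangle| < \rho/\alpha_k$ for each $k$, and also the diameter bound $|\xi|<\rho/\alpha_d$ is not what we want — instead one notes the set in $\xi$-space is contained in a box of side lengths $2\rho/\alpha_k$ in the $O_2$-coordinates. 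Then split the coordinates into those with $\alpha_k\leq\rho$ (where the constraint $|\langle O_2 w, e_k\rangle + \langle O_2 b,e_k\rangle| < \rho/\alpha_k$ is vacuous once we also use that $w$ can't be forced too large — actually for these directions we simply bound the factor by $1$) and those with $\alpha_k>\rho$ (where $\rho/\alpha_k<1$ and we genuinely gain). For the latter group, project $w$ orthogonally onto the span of the corresponding $O_2$-axes: the projected measure has density bounded by $K$ with respect to Lebesgue measure on that subspace, and the admissible set there has Lebesgue measure at most $\prod_{k:\alpha_k>\rho}(2\rho/\alpha_k)$, giving the product $\prod_{k:\alpha_k>\rho}(\rho/\alpha_k) = \prod_k \min\{\rho,\alpha_k\}/\alpha_k$ up to the constant $2^dK$.

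Finally, since this bound on $\eta\{|T_{\mathbf{i}\wedge\mathbf{j}}(w+b)|<\rho\}$ holds uniformly in $b$, I integrate out all the remaining perturbation variables: by Fubini (valid because, as remarked in the text, we work on the full-measure set $X=X_\theta$ so all the relevant functions of $\mathbf{y}$ are well-defined and measurable), $\mathbb{P}\{\mathbf{y}\in X : |\Pi^{\mathbf{y}}(\mathbf{i})-\Pi^{\mathbf{y}}(\mathbf{j})|<\rho\}$ is the expectation over the other coordinates of the conditional probability, each instance of which is $\leq C\,Z_{\mathbf{i}\wedge\mathbf{j}}(\rho)$, so the total is $\leq C\,Z_{\mathbf{i}\wedge\mathbf{j}}(\rho)$ as required. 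The main obstacle I anticipate is the uniform-in-$T$-and-$b$ geometric estimate in the previous paragraph: one must be careful that translating by $b$ never helps the measure (it can only shift the box, not shrink it) and that the constant $C$ really is independent of the dimension-$n$ matrix $T_{\mathbf{i}\wedge\mathbf{j}}$ — this is exactly where the property that \emph{all} lower-dimensional orthogonal projections of $\eta$ have uniformly bounded density (rather than just $\eta$ itself) is used, and it is why Definition \ref{condition} was supplemented by that remark. Everything else is bookkeeping of the type already carried out in Lemma \ref{ub lemma} and in \cite{tj}.
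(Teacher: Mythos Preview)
Your proposal is correct and follows essentially the same route as the paper: single out the perturbation variable $w=y_{i_0,\ldots,i_{n-1},i_n}$, condition on all other coordinates (absorbing them into a vector the paper calls $q$ and you call $b$), reduce to bounding $\eta\{w:|T_{\mathbf{i}\wedge\mathbf{j}}(w+b)|<\rho\}$, and then use the SVD plus the uniform bound $K$ on densities of orthogonal projections of $\eta$ to get the constant $C=2^dK$. The only cosmetic difference is that the paper fixes the index $k$ with $\alpha_{k+1}\le\rho\le\alpha_k$ and projects to that $k$-dimensional subspace in one step, whereas you phrase it as splitting the coordinates into those with $\alpha_k>\rho$ and those with $\alpha_k\le\rho$; these are the same thing.
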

\begin{proof} 
Let $|\mathbf{i} \wedge \mathbf{j}|=n$. We start by noting that
\begin{displaymath}
\mathbb{P}\{\textbf{y} \in X : |\Pi^{\mathbf{y}}(\mathbf{i}) - \Pi^{\mathbf{y}}(\mathbf{j})| < \rho\}= \mathbb{P}\{\textbf{y} \in X : |T_{\mathbf{i} \wedge \mathbf{j}}(y_{i_0,\ldots ,i_n} + q_n(\mathbf{i}, \mathbf{j}, \textbf{y}))| < \rho \}
\end{displaymath}
where 
\begin{eqnarray*} q_n(\mathbf{i}, \mathbf{j}, \textbf{y})&=& a_{i_n} + T_{i_n}(a_{i_{n+1}} + y_{i_0,\ldots, i_{n+1}}) + \cdots\\
& & - (a_{j_n} + y_{i_0,\ldots, j_n} + T_{j_n}(a_{j_{n+1}} + y_{i_0,\ldots, j_{n+1}}) + \cdots).
\end{eqnarray*}
Note that $q_n(\mathbf{i}, \mathbf{j}, \textbf{y})$ is independent of $y_{i_0,\ldots, i_n}$. Thus we can fix all of \textbf{y} except $y_{i_0,\ldots, i_n}$ and for convenience from now on we write $y= y_{i_0,\ldots ,i_n}$, and $q=q_n(\mathbf{i}, \mathbf{j}, \textbf{y})$. Then it is enough to prove the condition for
\begin{displaymath}
\eta\{y \in Y: y \in T_{\mathbf{i} \wedge \mathbf{j}}^{-1}B(T_{\mathbf{i} \wedge \mathbf{j}}q, \rho) \}.
\end{displaymath}
Denote $Box_{\rho} := [a_1 - \rho, a_1 + \rho] \times \cdots \times [a_d - \rho, a_d + \rho]=$

$[\pi_1(T_{\mathbf{i} \wedge \mathbf{j}}(q)) - \rho, \pi_1(T_{\mathbf{i} \wedge \mathbf{j}}(q)) + \rho] \times \cdots \times [\pi_d(T_{\mathbf{i} \wedge \mathbf{j}}(q)) - \rho, \pi_d(T_{\mathbf{i} \wedge \mathbf{j}}(q)) + \rho]$
where $\pi_k$ are the projections to the $x_k$ axes. Then
\begin{displaymath}
\eta\{y \in X: y \in T_{\mathbf{i} \wedge \mathbf{j}}^{-1}B(T_{\mathbf{i} \wedge \mathbf{j}}q, \rho) \} \leq \eta\{y \in T_{\mathbf{i} \wedge \mathbf{j}}^{-1}Box_{\rho}\}.
\end{displaymath}
Let $\{x_{\varphi(1)}, \ldots, x_{\varphi(d)}\}$ be the orthonormal elements in the new basis given by the rotation in $T_{\mathbf{i} \wedge \mathbf{j}}^{-1}$, such that the axes $x_{\varphi(k)}$ correspond to $\alpha_k$ in the following way: the principal semi axes of $T_{\mathbf{i} \wedge \mathbf{j}}(B) $ of length $\alpha_k$ lies along the axis $x_{\varphi(k)}$. Let $\pi_{\theta(k)}$ denote the projection to the $k$-dimensional plane that lies along the $\{x_{\varphi(1)},\ldots, x_{\varphi(k)}\}$ axes. Denote $\alpha_0= \infty$ and $\alpha_{d+1}=0$. Then for $\alpha_{k+1} \leq \rho \leq \alpha_{k}$, where $0 \leq k \leq d$, estimate 
$$\eta\{y \in T_{\mathbf{i} \wedge \mathbf{j}}^{-1}Box_{\rho}\} \leq \eta_{\theta(k)}\{\pi_{\theta(k)}T_{\mathbf{i} \wedge \mathbf{j}}^{-1}Box_{\rho}\}$$
where $\eta_{\theta(k)}$ is the pushforward measure $\pi_{\theta(k)\ast} \eta$.
Since $\eta_{\theta(k)}$ is by assumption absolutely continuous with respect to $\lambda_k$ (where this time $\lambda_k$ is the Lebesgue measure on a $k$-dimensional subspace of $\mathbb{R}^d$) and has bounded density with respect to $\lambda_k$ we have
\begin{eqnarray*}
\eta_{\theta(k)}\{\pi_{\theta(k)}T_{\mathbf{i} \wedge \mathbf{j}}^{-1}Box_{\rho}\} &\leq& K \lambda_k \{\pi_{\theta(k)}T_{\mathbf{i} \wedge \mathbf{j}}^{-1}Box_{\rho}\}\\
 &\leq& K \lambda_k \{\pi_{\theta(k)}T_{\mathbf{i} \wedge \mathbf{j}}^{-1}[-\rho, \rho]^d\} \\
&\leq& 2^k K \frac{\rho^k}{\alpha_1\ldots\alpha_k}
\end{eqnarray*}
where $K$ is a constant defined directly below Definition \ref{condition}. Putting $C=2^d K$ we get that $C$ is a constant independent of $\mathbf{i}$ and $\mathbf{j}$ and
\begin{displaymath}
\mathbb{P}\{\mathbf{y} \in X : | \Pi^{\mathbf{y}}(\mathbf{i}) -\Pi^{\mathbf{y}}(\mathbf{j})| < \rho \} < C \cdot Z_{\mathbf{i} \wedge \mathbf{j}}(\rho).
\end{displaymath} 
\end{proof}
We use the self-affine transversality condition in order to derive the following inequality, towards finding a lower bound for the Hausdorff dimension. 

\begin{lemma}
We have that for all non-integral $t\in (0,d)$ and for any $ \mathbf{i},\mathbf{j}\in J_{\infty}$,
\begin{displaymath}
\int_{\textbf{y} \in X} |\Pi^{\textbf{y}}(\mathbf{i})-\Pi^{\textbf{y}}(\mathbf{j})|^{-t} \mathrm{d}\mathbb{P} < \frac{c}{\phi^t(T_{\mathbf{i} \wedge \mathbf{j}})}.
\end{displaymath}
\label{c}
\end{lemma}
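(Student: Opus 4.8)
The plan is to reduce the integral to a one-dimensional estimate via the layer-cake formula and then invoke the self-affine transversality condition from Lemma~\ref{trans}. Write $n=|\mathbf{i}\wedge\mathbf{j}|$ and set $\alpha_k=\alpha_k(T_{\mathbf{i}\wedge\mathbf{j}})$ for the singular values. Since $|\Pi^{\mathbf{y}}(\mathbf{i})-\Pi^{\mathbf{y}}(\mathbf{j})|$ is bounded above by the diameter of $B$ (or, more carefully, by a bound coming from Lemma~\ref{ub lemma} applied on the set $X$), there is a uniform constant $R$ with $|\Pi^{\mathbf{y}}(\mathbf{i})-\Pi^{\mathbf{y}}(\mathbf{j})|\le R$ for all $\mathbf{y}\in X$. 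Then by Fubini/layer-cake,
$$\int_{\mathbf{y}\in X}|\Pi^{\mathbf{y}}(\mathbf{i})-\Pi^{\mathbf{y}}(\mathbf{j})|^{-t}\,\mathrm{d}\mathbb{P} = \int_0^{\infty}\mathbb{P}\{\mathbf{y}\in X:|\Pi^{\mathbf{y}}(\mathbf{i})-\Pi^{\mathbf{y}}(\mathbf{j})|^{-t}>u\}\,\mathrm{d}u,$$
and substituting $u=\rho^{-t}$ (so $\mathrm{d}u=-t\rho^{-t-1}\mathrm{d}\rho$) turns this into $t\int_0^{\infty}\rho^{-t-1}\,\mathbb{P}\{|\Pi^{\mathbf{y}}(\mathbf{i})-\Pi^{\mathbf{y}}(\mathbf{j})|<\rho\}\,\mathrm{d}\rho$, which splits as $t\int_0^{R}(\cdots)$ since the probability vanishes for... no, rather the probability is $\le 1$ always and the tail $\rho>R$ contributes $t\int_R^\infty \rho^{-t-1}\mathrm{d}\rho = R^{-t}$, a harmless additive constant; the substance is in $\int_0^R \rho^{-t-1}\,\mathbb{P}\{|\Pi^{\mathbf{y}}(\mathbf{i})-\Pi^{\mathbf{y}}(\mathbf{j})|<\rho\}\,\mathrm{d}\rho$.

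Next I would apply Lemma~\ref{trans} to bound $\mathbb{P}\{|\Pi^{\mathbf{y}}(\mathbf{i})-\Pi^{\mathbf{y}}(\mathbf{j})|<\rho\}$ by $C\cdot Z_{\mathbf{i}\wedge\mathbf{j}}(\rho)$, and then estimate $\int_0^\infty \rho^{-t-1}Z_{\mathbf{i}\wedge\mathbf{j}}(\rho)\,\mathrm{d}\rho$ directly from Definition~\ref{z}. Recall $Z_{\mathbf{i}\wedge\mathbf{j}}(\rho)=\prod_{k=1}^d \min\{\rho,\alpha_k\}/\alpha_k$, so on the interval $\alpha_{k+1}\le\rho\le\alpha_k$ (with $\alpha_0=\infty$, $\alpha_{d+1}=0$) we have $Z_{\mathbf{i}\wedge\mathbf{j}}(\rho)=\rho^k/(\alpha_1\cdots\alpha_k)$. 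Hence the integral breaks into a finite sum of pieces $\frac{1}{\alpha_1\cdots\alpha_k}\int_{\alpha_{k+1}}^{\alpha_k}\rho^{k-t-1}\,\mathrm{d}\rho$. Since $t$ is non-integral, the exponent $k-t-1$ is never $-1$, so each piece evaluates to $\frac{1}{\alpha_1\cdots\alpha_k}\cdot\frac{\alpha_k^{k-t}-\alpha_{k+1}^{k-t}}{k-t}$ (interpreting the boundary terms appropriately at $k=0$ and $k=d$, where $\alpha_0^{-t}$ is handled by the separate $\rho>R$ truncation above and $\alpha_{d+1}^{d-t}=0$ since $d-t>0$). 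Choosing $r$ with $r-1<t\le r$, one checks that the dominant term among these is the one matching $\phi^t(T_{\mathbf{i}\wedge\mathbf{j}})^{-1}=(\alpha_1\cdots\alpha_{r-1}\alpha_r^{t-r+1})^{-1}$: for $k<r$ the bound $\alpha_k^{k-t}$ dominates after telescoping, and for $k\ge r$ one uses $\alpha_{k+1}\le\alpha_k$ together with $k-t>0$. Collecting the finitely many terms with a constant $c$ depending only on $d$, $t$ and $C$ (not on $\mathbf{i},\mathbf{j}$) gives the claimed bound.

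The only real subtlety is bookkeeping the telescoping sum carefully so that every term is uniformly controlled by $\phi^t(T_{\mathbf{i}\wedge\mathbf{j}})^{-1}$ with a constant independent of the word $\mathbf{i}\wedge\mathbf{j}$; this is where non-integrality of $t$ is essential, since it rules out logarithmic divergences at $k=t$. A second point needing care, flagged already in the run-up to the lemma, is that $\Pi^{\mathbf{y}}$ is only defined on the full-measure set $X=X_\theta$ from Lemma~\ref{bc}, so all probabilities and integrals are taken over $X$; this causes no difficulty because $\mathbb{P}(X)=1$, the transversality estimate of Lemma~\ref{trans} is already stated over $X$, and the uniform upper bound $R$ on $|\Pi^{\mathbf{y}}(\mathbf{i})-\Pi^{\mathbf{y}}(\mathbf{j})|$ on $X$ makes the large-$\rho$ part of the layer-cake integral a finite constant. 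This is essentially the computation behind Proposition~4.4 of \cite{tj}, adapted to the present setting.
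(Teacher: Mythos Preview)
Your approach is the same as the paper's: layer-cake, then transversality (Lemma~\ref{trans}), then a piecewise evaluation of $\int_0^\infty Z_{\mathbf{i}\wedge\mathbf{j}}(\rho)\rho^{-t-1}\,\mathrm{d}\rho$ over the intervals $[\alpha_{k+1},\alpha_k]$, with the non-integrality of $t$ avoiding logarithms. The bookkeeping you sketch is exactly the ``diagonal/telescoping'' computation the paper carries out, just organised slightly differently (the paper splits once at $\alpha_k$ with $k-1<t<k$, bounds $Z\le \rho^k/(\alpha_1\cdots\alpha_k)$ uniformly on $[0,\alpha_k]$, and then sums the remaining pieces on $[\alpha_k,\infty)$).

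One genuine correction: your claimed uniform bound $R$ on $|\Pi^{\mathbf{y}}(\mathbf{i})-\Pi^{\mathbf{y}}(\mathbf{j})|$ over all $\mathbf{y}\in X$ does \emph{not} exist in this non-compactly supported setting. Lemma~\ref{ub lemma} only controls perturbations at levels $\ge N$ with $N=N(\mathbf{y})$, so for fixed $\mathbf{i},\mathbf{j}$ the coordinate $y_{i_0,\ldots,i_n}$ can be arbitrarily large on $X$. This matters because you invoke the $R$-truncation to dispose of the $k=0$ boundary term $\alpha_0^{-t}$. The fix is easy and is what the paper does: drop the $R$ detour entirely and simply note that for $\rho\ge\alpha_1$ one has $Z_{\mathbf{i}\wedge\mathbf{j}}(\rho)=1$, so the $k=0$ piece is $\int_{\alpha_1}^\infty \rho^{-t-1}\,\mathrm{d}\rho=\alpha_1^{-t}/t\le (t\,\phi^t(T_{\mathbf{i}\wedge\mathbf{j}}))^{-1}$ because $\phi^t\le\alpha_1^t$. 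With that adjustment your argument is complete and matches the paper.
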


\begin{proof}
 We can write 
$$
\int_{\textbf{y} \in X} |\Pi^{\textbf{y}}(\mathbf{i})-\Pi^{\textbf{y}}(\mathbf{j})|^{-t} \mathrm{d}\mathbb{P}  = t\int_{\rho=0}^{\infty}\mathbb{P}\{\textbf{y} \in X : |\Pi^{\mathbf{y}}(\mathbf{i})- \Pi^{\mathbf{y}}(\mathbf{j})| < \rho \}\rho^{-t-1}\mathrm{d}\rho.
$$
Thus, by the self-affine transversality condition, (\ref{sat}), it is enough to show that 
\begin{displaymath}
\int_{\rho=0}^{\infty} Z_{\mathbf{i} \wedge \mathbf{j}}(\rho)\rho^{-t-1}\mathrm{d}\rho \leq \frac{c}{\phi^t(T_{\mathbf{i} \wedge \mathbf{j}})}.
\end{displaymath}
Let $k$ be such that $k-1 < t < k$ and write 
\begin{displaymath}
\int_{\rho=0}^{\infty} Z_{\mathbf{i} \wedge \mathbf{j}}(\rho)\rho^{-t-1}d\rho = \int_{\rho=0}^{\alpha_k} Z_{\mathbf{i} \wedge \mathbf{j}}(\rho)\rho^{-t-1}\mathrm{d}\rho  + \int_{\rho= \alpha_k}^{\infty} Z_{\mathbf{i} \wedge \mathbf{j}}(\rho)\rho^{-t-1}\mathrm{d}\rho.
\end{displaymath}
We begin by dealing with the first integral. For each $i \geq k+1$ note that $\rho < \alpha_{i}$ implies that $\frac{\rho^i}{\alpha_1\ldots \alpha_i}= \frac{\rho^{i-1}\frac{\rho}{\alpha_i}}{\alpha_1\ldots \alpha_{i-1}\frac{\alpha_i}{\alpha_i}} < \frac{\rho^{i-1}}{\alpha_1\ldots \alpha_{i-1}}$ since $\rho < \alpha_i \Leftrightarrow \frac{\rho}{\alpha_i} < 1$. Since $\alpha_{i+1} <\rho < \alpha_{i}$ implies that $\rho < \alpha_{j}$ for all $k+1 \leq j \leq i$ it follows that 
\begin{displaymath}
Z_{\mathbf{i} \wedge \mathbf{j}}(\rho) < \frac{\rho^k}{\alpha_1\ldots\alpha_k}
\end{displaymath}
for all $\rho < \alpha_k$. Inserting this into the first integral we get 
\begin{eqnarray*}
\int_{\rho=0}^{\alpha_k} Z_{\mathbf{i} \wedge \mathbf{j}}(\rho)\rho^{-t-1}\mathrm{d}\rho  &<& \int_{\rho=0}^{\alpha_k} \frac{\rho^k}{\alpha_1\ldots\alpha_k}\rho^{-t-1}\mathrm{d}\rho =\frac{1}{k-t} [ \alpha_1\ldots\alpha_k^{t-(k-1)}]^{-1}\\
&=&\frac{1}{k-t}\frac{1}{\phi^t(T_{\mathbf{i} \wedge \mathbf{j}})}.
\end{eqnarray*}
Next, we move onto finding an upper bound for the second integral,
\begin{eqnarray*}
\int_{\rho= \alpha_k}^{\infty} Z_{\mathbf{i} \wedge \mathbf{j}}(\rho)\rho^{-t-1}\mathrm{d}\rho&=& \int_{\rho= \alpha_k}^{\alpha_{k-1}} Z_{\mathbf{i} \wedge \mathbf{j}}(\rho)\rho^{-t-1}\mathrm{d}\rho + \cdots + \int_{\rho= \alpha_1}^{\infty} Z_{\mathbf{i} \wedge \mathbf{j}}(\rho)\rho^{-t-1}\mathrm{d}\rho\\
&=&\sum_{l=0}^{k-1} \int_{\rho= \alpha_{l+1}}^{\alpha_{l}} Z_{\mathbf{i} \wedge \mathbf{j}}(\rho)\rho^{-t-1}\mathrm{d}\rho
\end{eqnarray*}
where as usual we take $\alpha_0= \infty$. Integrating we get
\begin{align*}
&&\frac{1}{t}(\alpha_1^{-t} - \alpha_0^{-t})\\
&+&\frac{1}{t-1}\frac{1}{\alpha_1}(\alpha_2^{1-t} - \alpha_1^{1-t}) \\
&+&\frac{1}{t-2}\frac{1}{\alpha_1\alpha_2}(\alpha_3^{2-t} - \alpha_2^{2-t}) \\
&\vdots&\\
&+&\frac{1}{t+1-k}\frac{1}{\alpha_1\alpha_2\ldots\alpha_{k-1}}(\alpha_k^{k-1-t} - \alpha_{k-1}^{k-1-t} )
\end{align*}
Noting that the second term in each line will always be greater than the first term in the previous line, we use a diagonal argument to deduce that
\begin{eqnarray*}
\int_{\rho= \alpha_k}^{\infty} Z_{\mathbf{i} \wedge \mathbf{j}}(\rho)\rho^{-t-1}\mathrm{d}\rho &<& \frac{1}{t+1-k}\frac{1}{\alpha_1\alpha_2\ldots\alpha_{k-1}}(\alpha_k^{k-1-t} )\\
&=& \frac{1}{t+1-k}(\alpha_1\alpha_2\ldots\alpha_{k-1}\alpha_k^{t-k+1} )^{-1}\\
 &=&\frac{1}{t+1-k}\frac{1}{\phi^t(T_{\mathbf{i} \wedge \mathbf{j}})}
\end{eqnarray*}
Thus we get 
\begin{displaymath}
\int_{\rho=0}^{\infty} Z_{\mathbf{i} \wedge \mathbf{j}}(\rho)\rho^{-t-1}\mathrm{d}\rho < \frac{1}{(k-t)(t+1-k)}\frac{1}{{\phi^t(T_{\mathbf{i} \wedge \mathbf{j}})}}
\end{displaymath}
where clearly $\frac{1}{(k-t)(t+1-k)}$ is independent of $\mathbf{i}$ and $\mathbf{j}$, which proves the result. 
\end{proof}

\begin{lemma}
Consider the self-affine IFS of the form (\ref{ifs}) and attractor $\Lambda^{\mathbf{y}}$ as in Definition \ref{attractor3}. Then $\mathbb{P}$-almost surely $\mathrm{dim_H(\Lambda^{\mathbf{y}})} \geq d(T_1, \ldots, T_m)$.
\label{lower bound}
\end{lemma}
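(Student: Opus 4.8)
The plan is to run the potential-theoretic (Frostman energy) argument of Falconer \cite{Falcpaper}: produce, on $\Lambda^{\mathbf{y}}$, a non-zero finite measure of finite $t$-energy for every $t$ slightly below $d(T_1,\ldots,T_m)$, and conclude via the energy characterisation of Hausdorff dimension. As in Lemma \ref{case 1} we may assume $d(T_1,\ldots,T_m)>0$, and since $\dim_H(\Lambda^{\mathbf{y}})\le d$ automatically, it is enough to show, for each non-integral $t$ with $0<t<\min\{d,\,d(T_1,\ldots,T_m)\}$, that $\dim_H(\Lambda^{\mathbf{y}})\ge t$ for $\mathbb{P}$-almost every $\mathbf{y}$; intersecting the corresponding full-measure sets over a sequence of such $t$ increasing to $\min\{d,\,d(T_1,\ldots,T_m)\}$ then gives the statement. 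Fix such a $t$ and fix $s$ with $t<s<d(T_1,\ldots,T_m)$. Let $\mu$ be the finite measure on $J_\infty$ from Lemma \ref{existence}, so $\mu([\omega])\le c'\phi^s(T_\omega)$ for all $\omega\in J$. Throughout work on the full-measure set $X=X_\theta$ (with $\norm{T}<\theta<1$) of Lemma \ref{bc}, on which $\Pi^{\mathbf{y}}$ is well defined (Lemma \ref{welldefined}) and, by the uniform bounds of Lemma \ref{ub lemma}, continuous; hence $\Lambda^{\mathbf{y}}=\{\Pi^{\mathbf{y}}(\mathbf{i}):\mathbf{i}\in J_\infty\}$ is compact and carries the non-zero finite measure $\nu_{\mathbf{y}}:=(\Pi^{\mathbf{y}})_\ast\mu$.

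Next I would estimate the expected $t$-energy $I_t(\nu_{\mathbf{y}})=\int\!\!\int|x-y|^{-t}\,\mathrm{d}\nu_{\mathbf{y}}(x)\,\mathrm{d}\nu_{\mathbf{y}}(y)$. By change of variables, $I_t(\nu_{\mathbf{y}})=\int_{J_\infty}\!\int_{J_\infty}|\Pi^{\mathbf{y}}(\mathbf{i})-\Pi^{\mathbf{y}}(\mathbf{j})|^{-t}\,\mathrm{d}\mu(\mathbf{i})\,\mathrm{d}\mu(\mathbf{j})$; integrating in $\mathbf{y}$ over $X$, interchanging the order of integration (legitimate because we have restricted to $X=X_\theta$, where the integrand is jointly measurable and non-negative), and applying Lemma \ref{c} gives $\int_X I_t(\nu_{\mathbf{y}})\,\mathrm{d}\mathbb{P}\le c\int_{J_\infty}\!\int_{J_\infty}\phi^t(T_{\mathbf{i}\wedge\mathbf{j}})^{-1}\,\mathrm{d}\mu(\mathbf{i})\,\mathrm{d}\mu(\mathbf{j})$. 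To see the last integral is finite, decompose according to the length of the common prefix: for fixed $\mathbf{i}$,
\begin{eqnarray*}
\int_{J_\infty}\phi^t(T_{\mathbf{i}\wedge\mathbf{j}})^{-1}\,\mathrm{d}\mu(\mathbf{j})
&=&\sum_{n=0}^{\infty}\phi^t(T_{i_0,\ldots,i_{n-1}})^{-1}\,\mu\{\mathbf{j}:\mathbf{i}\wedge\mathbf{j}=(i_0,\ldots,i_{n-1})\}\\
&\le& c'\sum_{n=0}^{\infty}\frac{\phi^s(T_{i_0,\ldots,i_{n-1}})}{\phi^t(T_{i_0,\ldots,i_{n-1}})}
\ \le\ c'\sum_{n=0}^{\infty}\norm{T}^{n(s-t)}\ <\ \infty,
\end{eqnarray*}
where in the last step I use that, comparing the two singular value functions term by term, $\phi^s(T)/\phi^t(T)\le\alpha_1(T)^{s-t}\le\norm{T}^{s-t}$, together with $\norm{T_{i_0,\ldots,i_{n-1}}}\le\norm{T}^n$. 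The bound is uniform in $\mathbf{i}$, so integrating against the finite measure $\mu$ yields $\int_X I_t(\nu_{\mathbf{y}})\,\mathrm{d}\mathbb{P}<\infty$.

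Consequently $I_t(\nu_{\mathbf{y}})<\infty$ for $\mathbb{P}$-almost every $\mathbf{y}\in X$. For such $\mathbf{y}$, $\Lambda^{\mathbf{y}}$ is a compact set supporting the non-zero finite measure $\nu_{\mathbf{y}}$ of finite $t$-energy, so the standard energy (Frostman) criterion for Hausdorff dimension, as used in \cite{Falcpaper}, gives $\dim_H(\Lambda^{\mathbf{y}})\ge t$. Taking $t$ through a sequence of non-integral values increasing to $\min\{d,\,d(T_1,\ldots,T_m)\}$ and intersecting the corresponding full-measure sets completes the proof, recalling $\dim_H(\Lambda^{\mathbf{y}})\le d$. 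The one step requiring genuine care, and the only real departure from the compactly supported case of \cite{tj}, is the interchange of integrals in the energy estimate: this is precisely why we work throughout on $X=X_\theta$ rather than on all of $Y^{\infty}$, so that $\Pi^{\mathbf{y}}$, and hence the integrand, is defined and measurable everywhere in play.
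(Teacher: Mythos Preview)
Your proof is correct and follows essentially the same approach as the paper: fix $t<s<d(T_1,\ldots,T_m)$ with $t$ non-integral, take the measure $\mu$ from Lemma \ref{existence}, use Fubini on $X$ together with Lemma \ref{c} to bound the expected $t$-energy by $c\iint\phi^t(T_{\mathbf{i}\wedge\mathbf{j}})^{-1}\,\mathrm{d}\mu\,\mathrm{d}\mu$, and then control this by the ratio bound $\phi^s/\phi^t\le\alpha_1^{s-t}$ and a geometric series. The only cosmetic difference is that you decompose by fixing $\mathbf{i}$ and summing over the length of $\mathbf{i}\wedge\mathbf{j}$, whereas the paper sums over all words $\omega$ of each length and uses $\mu([\omega])^2$; these are equivalent bookkeepings of the same estimate, and your use of $\norm{T}$ in place of the paper's $b=\max_i\alpha_1(T_i)$ is harmless since they coincide.
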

\begin{proof}
 We use the potential theoretic characterisation of Hausdorff dimension. Let $t<s<d(T_1,\ldots,T_m)$ be chosen such that $t\notin\Z$. We need to show that there exists a finite measure $\mu$ supported on $J_{\infty}$ such that for $\mathbb{P}$-almost all $\textbf{y} \in X$,
\begin{displaymath}
\int\int_{(\mathbf{i}, \mathbf{j}) \in J_{\infty} \times J_{\infty}}|\Pi^{\mathbf{y}}(\mathbf{i}) - \Pi^{\mathbf{y}}(\mathbf{j})|^{-t}\mathrm{d}\mu (\mathbf{i}) \mathrm{d}\mu(\mathbf{j}) < \infty
\end{displaymath}
Equivalently, we need to show that the triple integral
\begin{displaymath}
\int_{\mathbf{y} \in X}\int\int_{(\mathbf{i}, \mathbf{j}) \in J_{\infty} \times J_{\infty}} |\Pi^{\mathbf{y}}(\mathbf{i}) - \Pi^{\mathbf{y}}(\mathbf{j})|^{-t}\mathrm{d}\mu(\mathbf{i})\mathrm{d}\mu(\mathbf{j})\mathrm{d}\mathbb{P}(\mathbf{y}) < \infty
\end{displaymath}
Take $\mu$ that satisfies Lemma \ref{existence}. By Fubini's theorem and Lemma \ref{c} it suffices to show that the following is finite:
\begin{eqnarray*}
&&\int\int_{(\mathbf{i}, \mathbf{j}) \in J_{\infty} \times J_{\infty}}\int_{\mathbf{y} \in X} |\Pi^{\mathbf{y}}(\mathbf{i}) - \Pi^{\mathbf{y}}(\mathbf{j})|^{-t}\mathrm{d}\mathbb{P}(\mathbf{y})\mathrm{d}\mu(\mathbf{i})d\mu(\mathbf{j})\\
 &<& c\int\int_{(\mathbf{i}, \mathbf{j}) \in J_{\infty} \times J_{\infty}} \frac{1}{\phi^t(T_{\mathbf{i} \wedge \mathbf{j}})}\mathrm{d}\mu(\mathbf{i})\mathrm{d}\mu(\mathbf{j}).
\end{eqnarray*}
Rewriting $\mathbf{i} \wedge \mathbf{j}= \omega$ and recalling that the measure $\mu$ was chosen to satisfy Lemma \ref{existence}, we get that the above is equal to
\begin{displaymath}
c\sum_{k=0}^{\infty} \sum_{|\omega|=k}\int\int_{\mathbf{i} \wedge \mathbf{j} = \omega} \frac{1}{\phi^t(T_{\omega})}\mathrm{d}\mu(\mathbf{i})\mathrm{d}\mu(\mathbf{j}) \leq c\sum_{k=0}^{\infty} \sum_{|\omega|=k}\mu([\omega])^2\frac{1}{\phi^t(T_{\omega})}
\end{displaymath}
\begin{equation}
\leq cc'\sum_{k=0}^{\infty} \sum_{|\omega|=k}\mu([\omega])\phi^s(T_{\omega})\frac{1}{\phi^t(T_{\omega})}
\label{lb}
\end{equation}
Now, we use some properties of the singular value function to bound this above. Choose $a$, $b$ such that for all $i \in \{1,\ldots, m\}$ we have 
$$0<a \leq\alpha_d(T_i) \leq \cdots \leq \alpha_1(T_i) \leq b<1.$$ 
In particular, since the singular value function is submultiplicative it follows that for all finite words $\textbf{i} \in J_n$ we have 
\begin{equation}
\phi^s(T_{\textbf{i}})= \phi^s(T_{i_1}\circ\cdots\circ T_{i_n}) \leq \phi^s(T_{i_1})\cdots\phi^s(T_{i_n}) \leq b^n= b^{|\textbf{i}|}
\label{submult}
\end{equation}
Secondly, note that since for any $s, h >0$ and matrix $T$ we have $\phi^{s+h}(T) \leq \phi^s(A) \alpha_1^h(T)$, then dividing this through by $\phi^s(T)$, fixing $h= s - t >0$ and $T= T_{\omega}$ we get that 
\begin{equation}
\frac{\phi^s(T_{\omega})}{\phi^t(T_{\omega})}=\frac{\phi^{t +(s-t)}(T_{\omega})}{\phi^t(T_{\omega})} \leq \alpha_1^{s-t}(T_{\omega}) \leq b^{|\omega|(s-t)}
\label{b}
\end{equation}
by (\ref{submult}). We put this into (\ref{lb}) to get
\begin{eqnarray*}
&&cc'\sum_{k=0}^{\infty} \sum_{|\omega|=k}\mu([\omega])\phi^s(T_{\omega})\frac{1}{\phi^t(T_{\omega})}\\
 &&\leq cc'\sum_{k=0}^{\infty} \sum_{|\omega|=k}\mu([\omega]) b^{k(s-t)}= cc'\sum_{k=0}^{\infty} b^{k(s-t)} < \infty
\end{eqnarray*}
since $b^{s-t} < 1$ whenever $s>t$ thus this geometric progression converges. Since we can put $t$ arbitrarily close to $s$ which in turn we can place arbitrarily close to $d(T_1,\ldots,T_m)$, by the potential theoretic characterisation of Hausdorff dimension it follows that $\textrm{dim}_H(\Lambda^{\mathbf{y}}) \geq t$ for any non-integer $t<d(T_1,\ldots,T_m)$, in other words, $\textrm{dim}_H(\Lambda^{\mathbf{y}}) \geq d(T_1,\ldots,T_m)$.
 \end{proof}

\begin{lemma}
Consider the self-affine IFS of the form (\ref{ifs}) and the attractor $\Lambda^{\mathbf{y}}$ as in Definition \ref{attractor3}. If $d(T_1,\ldots,T_m) > d$ then for $\mathbb{P}$-almost all $\mathbf{y}$ then $\lambda_d(\Lambda^{\mathbf{y}}) >0$
\label{lebesgue}
\end{lemma}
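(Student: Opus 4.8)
The plan is to produce, for $\mathbb{P}$-almost every $\mathbf{y}$, a non-zero finite measure carried by $\Lambda^{\mathbf{y}}$ that is absolutely continuous with respect to $\lambda_d$; this at once forces $\lambda_d(\Lambda^{\mathbf{y}})>0$. As in the previous lemmas of this section we restrict to the full-measure set $X=X_\theta$ of Lemma \ref{bc} with $\theta>\norm{T}$, on which $\Pi^{\mathbf{y}}$ is well defined (Lemma \ref{welldefined}), continuous in $\mathbf{i}$, and measurable in $\mathbf{y}$, so that Fubini's theorem applies. Let $\mu$ be the non-zero finite measure on $J_\infty$ of Lemma \ref{existence} (which is non-atomic, since $\phi^s(T_\omega)\to 0$ along any infinite word by submultiplicativity), put $\nu^{\mathbf{y}}:=\Pi^{\mathbf{y}}_{\ast}\mu$, a non-zero finite measure carried by the compact set $\Lambda^{\mathbf{y}}=\Pi^{\mathbf{y}}(J_\infty)$, and for $r>0$ let $f^{\mathbf{y}}_r(x):=\nu^{\mathbf{y}}(B(x,r))/\lambda_d(B(0,r))$ be the associated $r$-mollified density.

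The first step is the elementary bound, obtained by expanding $\nu^{\mathbf{y}}(B(x,r))^2$, integrating in $x$, and pushing forward through $\Pi^{\mathbf{y}}$,
$$\norm{f^{\mathbf{y}}_r}_2^2\le \frac{1}{\lambda_d(B(0,r))}\int\int_{J_\infty\times J_\infty}\mathbf{1}\{|\Pi^{\mathbf{y}}(\mathbf{i})-\Pi^{\mathbf{y}}(\mathbf{j})|<2r\}\,\mathrm{d}\mu(\mathbf{i})\,\mathrm{d}\mu(\mathbf{j}).$$
Taking expectations in $\mathbf{y}$, applying Fubini, then the self-affine transversality condition (\ref{sat}), and finally splitting the double integral over $J_\infty\times J_\infty$ according to the common prefix $\omega=\mathbf{i}\wedge\mathbf{j}$ (the diagonal $\{\mathbf{i}=\mathbf{j}\}$ being $\mu\otimes\mu$-null as $\mu$ is non-atomic, and $\{\mathbf{i}\wedge\mathbf{j}=\omega\}\subseteq[\omega]\times[\omega]$) gives
$$\mathbb{E}\left[\norm{f^{\mathbf{y}}_r}_2^2\right]\le \frac{C}{\lambda_d(B(0,r))}\sum_{\omega\in J}Z_\omega(2r)\,\mu([\omega])^2,$$
where $C$ is the constant of (\ref{sat}) and $Z_\omega$ is the function of Definition \ref{z} associated with $T_\omega$.

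The key step is to estimate this sum. Choose $s$ with $\tfrac12\big(d+d(T_1,\ldots,T_m)\big)<s<d(T_1,\ldots,T_m)$, which is possible precisely because $d(T_1,\ldots,T_m)>d$ (in particular $s>d$). Lemma \ref{existence} gives $\mu([\omega])^2\le (c')^2\phi^s(T_\omega)^2=(c')^2\phi^d(T_\omega)^{2s/d}$, since $\phi^s(T)=|\det T|^{s/d}=\phi^d(T)^{s/d}$ for $s\ge d$; while Definition \ref{z} gives immediately $\phi^d(T_\omega)\,Z_\omega(\rho)=\prod_{k=1}^{d}\min\{\rho,\alpha_k(T_\omega)\}\le\rho^d$, that is, $Z_\omega(\rho)\le\rho^d/\phi^d(T_\omega)$. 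Multiplying these,
$$Z_\omega(2r)\,\mu([\omega])^2\le (2r)^d(c')^2\,\phi^d(T_\omega)^{2s/d-1}=(2r)^d(c')^2\,\phi^{2s-d}(T_\omega),$$
so, since $\lambda_d(B(0,r))$ is a fixed multiple of $r^d$, summing over all finite words yields the bound, uniform in $r$,
$$\mathbb{E}\left[\norm{f^{\mathbf{y}}_r}_2^2\right]\le C'\sum_{n=0}^{\infty}\sum_{J_n}\phi^{2s-d}(T_{\mathbf{i}})<\infty,$$
the series being finite because $2s-d>d(T_1,\ldots,T_m)$ by the choice of $s$ and Definition \ref{s}.

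To conclude, Fatou's lemma gives $\mathbb{E}\left[\liminf_{r\to0}\norm{f^{\mathbf{y}}_r}_2^2\right]<\infty$, so for $\mathbb{P}$-almost every $\mathbf{y}\in X$ there is a sequence $r_k\to0$ along which $\norm{f^{\mathbf{y}}_{r_k}}_2$ remains bounded. Since $f^{\mathbf{y}}_{r_k}\,\mathrm{d}x\to\nu^{\mathbf{y}}$ weakly as measures and bounded subsets of the Hilbert space $L^2(\lambda_d)$ are weakly sequentially compact, $\nu^{\mathbf{y}}$ has a density in $L^2(\lambda_d)$; being a non-zero measure carried by $\Lambda^{\mathbf{y}}$, this forces $\lambda_d(\Lambda^{\mathbf{y}})>0$, and since $X$ has full measure this holds $\mathbb{P}$-almost surely. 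Essentially all of the probabilistic work has been done already in Lemma \ref{trans}, and the combinatorial estimate of the sum is routine; the only point requiring a little more than bookkeeping is the final passage from a uniform $L^2$ bound on the mollified densities to absolute continuity of $\nu^{\mathbf{y}}$, which is the standard argument of Falconer \cite{Falcpaper} reproduced in \cite{tj}.
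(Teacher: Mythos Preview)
Your argument is correct and follows the same Peres--Solomyak route as the paper: push forward $\mu$ by $\Pi^{\mathbf{y}}$, use Fubini and the self-affine transversality estimate (\ref{sat}) together with the crude bound $Z_\omega(\rho)\le\rho^d/\phi^d(T_\omega)$, and then control the resulting sum $\sum_\omega \mu([\omega])^2/\phi^d(T_\omega)$. The only substantive differences are cosmetic. First, you phrase the absolute-continuity criterion via uniform $L^2$ bounds on the mollified densities $f_r^{\mathbf{y}}$ and weak compactness, whereas the paper uses the equivalent lower-density form $\int\liminf_{r\to0}\nu^{\mathbf{y}}(B(x,r))/r^d\,\mathrm{d}\nu^{\mathbf{y}}(x)<\infty$; both are standard implementations of the same idea from \cite{ps}. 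Second, in bounding the sum the paper replaces only one factor of $\mu([\omega])$ by $c'\phi^s(T_\omega)$ with any $d<s<d(T_1,\ldots,T_m)$, uses $\phi^s/\phi^d\le b^{|\omega|(s-d)}$, and sums a geometric series against $\sum_{|\omega|=k}\mu([\omega])=\mu(J_\infty)$; you instead replace both factors, exploit $\phi^s=(\phi^d)^{s/d}$ for $s\ge d$, and appeal directly to Definition~\ref{s} at the exponent $2s-d$, which forces the slightly sharper choice $s>\tfrac12(d+d(T_1,\ldots,T_m))$. Either bookkeeping works; neither gains anything over the other.
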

\begin{proof}
 Let $\Pi_{\star}^{\mathbf{y}}\mu$ be the natural projection of the measure $\mu$ defined in \ref{existence}. It is clearly enough to show that $\Pi_{\star}^{\mathbf{y}}\mu$ is absolutely continuous with respect to $\lambda_d$ for $\mathbb{P}$-almost all $\mathbf{y}$. We follow a standard approach (introduced by Peres and Solomyak in \cite{ps}) to show absolute continuity of $\Pi_{\star}^{\mathbf{y}}\mu$ for $\mathbb{P}$-almost all $\mathbf{y}$. In particular it suffices to show that 
$$I:= \int_X \int \liminf_{r \to 0} \frac{\Pi_{\star}^{\mathbf{y}}\mu(B(x, r))}{r^d}\mathrm{d}\Pi_{\star}^{\mathbf{y}}\mu \mathrm{d}\mathbb{P}(\mathbf{y}) < \infty.$$
By Fatou's lemma
\begin{eqnarray*}
I &\leq& \liminf_{r \to 0} \frac{1}{r^d} \int_X \int \Pi_{\star}^{\mathbf{y}}\mu(B(x, r)) \mathrm{d}\Pi_{\star}^{\mathbf{y}}\mu \mathrm{d}\mathbb{P}(\mathbf{y}) \\
&=& \liminf_{r \to 0} \frac{1}{r^d} \int_X \int \int \chi_{(x' \in B(x, r))} \mathrm{d}\Pi_{\star}^{\mathbf{y}}(x')\mu \mathrm{d}\Pi_{\star}^{\mathbf{y}}(x)\mu \mathrm{d}\mathbb{P}(\mathbf{y})\\
&=& \liminf_{r \to 0} \frac{1}{r^d} \int_X \int \int \chi_{\{(\mathbf{i},\mathbf{j}) : |\Pi^{\mathbf{y}}(\mathbf{i}) - \Pi^{\mathbf{y}}(\mathbf{j})| < r\} } \mathrm{d}\mu(\mathbf{i}) \mathrm{d}\mu(\mathbf{j}) \mathrm{d}\mathbb{P}(\mathbf{y})\\
&\leq& \liminf_{r \to 0} \frac{1}{r^d} \int \int \mathbb{P}\{\textbf{y} \in X : |\Pi^{\mathbf{y}}(\mathbf{i}) - \Pi^{\mathbf{y}}(\mathbf{j})| < r\}\mathrm{d}\mu(\mathbf{i}) \mathrm{d}\mu(\mathbf{j}) 
\end{eqnarray*}
by Fubini's theorem. Next note that by definition
$$Z_{\mathbf{i} \wedge \mathbf{j}}(\rho) := \prod_{k=1}^{d} \frac{\min\{\rho, \alpha_k(T_{\mathbf{i} \wedge \mathbf{j}})\}}{\alpha_k(T_{\mathbf{i} \wedge \mathbf{j}})} \leq \frac{\rho^d}{\alpha_1(T_{\mathbf{i} \wedge \mathbf{j}})\cdots\alpha_d(T_{\mathbf{i} \wedge \mathbf{j}})}$$
so that by Lemma \ref{trans}, 
$$\mathbb{P}\{\textbf{y} \in X : |\Pi^{\mathbf{y}}(\mathbf{i}) - \Pi^{\mathbf{y}}(\mathbf{j})| < r\} < C \cdot \frac{r^d}{\alpha_1(T_{\mathbf{i} \wedge \mathbf{j}})\cdots\alpha_d(T_{\mathbf{i} \wedge \mathbf{j}})}.$$ Thus
\begin{eqnarray*}
I &\leq& C \liminf_{r \to 0} \frac{1}{r^d} \int \int \frac{r^d}{\alpha_1(T_{\mathbf{i} \wedge \mathbf{j}})\cdots\alpha_d(T_{\mathbf{i} \wedge \mathbf{j}})} \mathrm{d}\mu(\mathbf{i}) \mathrm{d}\mu(\mathbf{j})\\
&\leq& C \sum_{k=0}^{\infty} \sum_{|\omega|=k} \frac{\mu([\omega])^2}{\phi^d(T_{\omega})}\\
&\leq&c'C\sum_{k=0}^{\infty} \sum_{|\omega|=k} \frac{\phi^s(T_{\omega})}{\phi^d(T_{\omega})}\mu([\omega])\\
&\leq&c'C\sum_{k=0}^{\infty} \sum_{|\omega|=k} \frac{\phi^s(T_{\omega})}{\phi^d(T_{\omega})}\mu([\omega])\\
&\leq&c'C\sum_{k=0}^{\infty} b^{k(s-d)} \sum_{|\omega|=k} \mu([\omega])
\end{eqnarray*}
by the choice of $\mu$ in Lemma \ref{existence} and by (\ref{b}). Thus
\begin{eqnarray*}
I &<& \infty
\end{eqnarray*}
since $s>d$.
This proves the result. \end{proof}
\noindent \textit{Proof of Theorem \ref{main}}: This is a direct consequence of Lemmas \ref{case 1}, \ref{lower bound} and \ref{lebesgue}. \qed
\section{Comments and questions}
We conclude this note by making a few comments about possible extensions of this work.
\begin{enumerate}
\item[1.]
In Theorem 3 in \cite{tj} a result on the dimension and absolute continuity of the projection of an ergodic measure from the shift space is considered. The same result should hold where the random perturbations are distributed independently according to a measure $\eta$ satisfying Definition \ref{condition}.
\item[2.]
The conditions on $\eta$ in Definition \ref{condition} can be relaxed if a different model of randomness is used. If rather than allowing each term $y_{i_0,\ldots,i_{n-1}}$ to be distributed according to $\eta$ we assume that $y_{i_0,\ldots,i_{n-1}}=y_{j_0,\ldots,j_{n-1}}$ whenever $i_{n-1}=j_{n-1}$ we can relax the conditions on $\eta$. In particular this means that the condition on $\eta$ required is that for all $\theta<1$ we have $\sum_{n=1}^{\infty}m\eta\{y\in\R^d:|y|\geq\theta^{-n}\}<\infty$ and thus we can consider distributions with a much weaker condition on the rate of the decay of the tails than in Definition \ref{condition}. This comes from the fact that with this new model we'll have that if $A_n$ is defined as in the proof of Lemma \ref{bc} then 
$$\mathbb{P}(A_n) \leq m \eta \{y \in \mathbb{R}^d : |y| \geq \theta^{-n} \}$$
since there are only $m$ distinct perturbations on the $n$-th level, and, for the Borel-Cantelli argument to work we need the infinite sum of probabilities $\sum_{n=1}^{\infty} \mathbb{P}(A_n)$ to be finite.
\item[3.]
In \cite{Falcpaper3} generalised dimensions of self-affine measures with random perturbations are considered. The random perturbations in this paper are defined in the same way as in \cite{tj}. It should be possible to extend the results in \cite{Falcpaper3} to non-compactly supported permutations satisfying the assumptions in Definition \ref{condition}.

\end{enumerate}

\end{document}